\documentclass[12pt, reqno]{amsart}

\usepackage[paperheight=297mm, paperwidth=210mm, left=30mm, right=20mm]{geometry}
\geometry{top=30mm, headheight=3mm, headsep=12mm}
\geometry{bottom=22mm, foot=3mm, footskip=10mm}

\usepackage{amssymb,latexsym,amsmath,amsfonts,amsthm,amsxtra}
\usepackage{latexsym}
\usepackage[mathscr]{eucal}
\usepackage{bm}
\usepackage{rotating}
\usepackage{pgfplots}
\usepackage{hyperref}
\usepackage{emptypage}
\usepackage{enumerate}
\usepackage{enumitem}

\newcommand{\C}{\mathbb{C}}

\newcommand{\Cn}{\mathbb{C}^n}
\newcommand{\D}{\mathbb{D}}
\newcommand{\T}{\mathbb{T}}

\newcommand{\G}{\widetilde{\mathbb{G}}_3}

\newcommand{\gn}{\mathbb{G}_n}
\newcommand{\Gn}{\widetilde{\mathbb{G}}_n}

\newcommand{\Gamn}{\widetilde{\Gamma}_n}

\newcommand{\p}{\Phi_1}
\newcommand{\vp}{\psi}
\newcommand{\M}{\mathcal{M}}

\newcommand{\al}{\alpha}
\newcommand{\be}{\beta}
\newcommand{\lm}{\lambda}
\newcommand{\om}{\omega}
\newcommand{\sg}{\nu}

\newcommand{\q}{\quad}
\newcommand{\qq}{\qquad}
\newcommand{\n}{\lVert}
\newcommand{\imp}{\Rightarrow}

\newcommand{\lf}{\left(}
\newcommand{\ls}{\left\{}
\newcommand{\lt}{\left[}
\newcommand{\rf}{\right)}
\newcommand{\rs}{\right\}}
\newcommand{\rt}{\right]}
\newcommand{\df}{\dfrac}

\newcommand{\la}{\langle}
\newcommand{\ra}{\rangle}
\newcommand{\nj}{{n \choose j}}

\def ~{\hspace{1mm}}

		\def\textmatrix#1&#2\\#3&#4\\{\bigl({#1 \atop #3}\ {#2 \atop #4}\bigr)}
		\def\dispmatrix#1&#2\\#3&#4\\{\left({#1 \atop #3}\ {#2 \atop #4}\right)}

		\newtheorem{defn}{Definition}[section] % This Section declares

		\newtheorem{lemma}[defn]{Lemma}
		\newtheorem{thm}[defn]{Theorem}
		\newtheorem{rem}[defn]{Remark}
		%\newtheorem{proof}{Proof}
		
		%\numberwithin{equation}{chapter}%{section}

		%\date{}
		
\begin{document}
%\pagenumbering{Roman} \thispagestyle{empty}
\title[Interpolating functions for $\widetilde{\mathbb G}_n$]
{Interpolating functions for a family of domains related to $\mu$-synthesis}

\author[Samriddho Roy]{Samriddho Roy}
\address[Samriddho Roy]{Tata Institute of Fundamental Research Centre for Applicable Mathematics, Bangalore - 560065, India.} \email{samriddhoroy@gmail.com}

\keywords{Interpolation; Invariant distances; Structured singular value; Symmetrized polydisc; extended symmetrized polydisc.}

\subjclass[2010]{30C80, 32E30, 32F45, 93B50}

%\thanks{The author is supported by the Institute fellowship of TIFR CAM}

\begin{abstract}
	Assuming the existence of an analytic interpolant mapping a two-point data from the unit disc $\D$ to $\Gn$,
	we describe a class of such interpolating functions where
	\begin{align*}
	\widetilde{\mathbb G}_n := \Bigg\{ (y_1,\dots,y_{n-1}, q)\in
	\C^n :\; q \in \mathbb D, \;  y_j = \be_j + \bar \be_{n-j} q, \; \beta_j \in \mathbb C &\text{ and }\\
	|\beta_j|+ |\beta_{n-j}| < {n \choose j} &\text{ for } j=1,\dots, n-1 \Bigg\}.
	\end{align*}
	We present the connection of $\Gn$ with the  $\mu$-synthesis problem.	
\end{abstract}
\maketitle	
\section{Introduction}
This article is a sequel of \cite{pal-roy 4} and \cite{pal-roy 5}. In \cite{pal-roy 4} the author and Pal introduced a new family of domains, namely 
the \textit{extended symmetrized polydisc}, $\Gn$, where
\begin{align*}
\widetilde{\mathbb G}_n : = \Bigg\{ (y_1,\dots,y_{n-1}, q)\in \C^n
: \;
&|q|<1 , \:  y_j = \be_j + \bar \be_{n-j} q \: \text{ with }\; \\ & |\beta_j|  +  |\beta_{n-j}| < {n \choose j},  1\leq j\leq n-1 \Bigg\},
\end{align*}
and found a variety of new characterizations of the points of 
the \textit{symmetrized polydisc}, $\mathbb G_n$, where
\begin{equation*}
\mathbb G_n =\left\{ \left(\sum_{1\leq i\leq n} z_i,\sum_{1\leq
	i<j\leq n}z_iz_j,\dots,
\prod_{i=1}^n z_i \right): \,|z_i|< 1, i=1,\dots,n \right \}\,.
\end{equation*}	
The symmetrized polydisc is directly associated with the spectral interpolation. For a matrix $A$, the spectral radius $r(A)<1$  if and only if
$\pi_n(\lambda_1,\dots, \lambda_n) \in \mathbb G_n$ (see
\cite{costara1}), where $\lambda_1, \dots , \lambda_n$ are
eigenvalues of $A$ and $\pi_n$ is the symmetrization map on
$\mathbb C^n$ defined by
\[
\pi_n(z_1,\dots, z_n) = \left(\sum_{1\leq i\leq n} z_i,\sum_{1\leq
	i<j\leq n}z_iz_j,\dots, \prod_{i=1}^n z_i \right).
\]

The Schwarz lemma for the symmetrized bidisc $\mathbb G_2$ was presented in \cite{AY-BLMS} (also see \cite{pal-roy 1}). But there was no Schwarz type lemma for $\gn$, $n \geq 3$. Note that $\widetilde{\mathbb G}_2 = \mathbb G_2$ and $\gn \subsetneqq \Gn$ for $n\geq 3$ (see \cite{costara1}, \cite{pal-roy 4}). In \cite{pal-roy 5}, the author and Pal produced a Schwarz lemma for $\Gn$ and $\gn$ and showed that an interpolating function, when exists, may not be unique. The aim of this paper is to describe a class of analytic interpolants when we have such a Schwarz lemma for $\Gn$.

The closure of $\Gn$ is denoted by $\Gamn$. to study complex geometry of $\Gn$ and $\widetilde\Gamma_n$, in \cite{pal-roy 4}, we introduced $(n-1)$ fractional linear transformations $\Phi_1, \dots, \Phi_{n-1}$. Recall that, for $z \in \C$ and $y=(y_1,\dots,y_{n-1},q) \in \Cn $ we define 
%$\Phi_j(z,y) = \dfrac{{n \choose j}qz-y_j}{y_{n-j}z-{n \choose j}}$ whenever $y_{n-j}z\neq {n \choose j}$ and $y_j y_{n-j}\neq {n \choose j}^2 q$. 
\[
\Phi_j(z,y) = \dfrac{{n \choose j}qz-y_j}{y_{n-j}z-{n \choose j}} \q \text{ whenever } y_{n-j}z\neq {n \choose j} \text{ and } y_j y_{n-j}\neq {n \choose j}^2 q.
\]
Theorems $2.5$ and $2.7$ of \cite{pal-roy 4} characterize the points in $\Gn$ and $\Gamn$. The extended symmetrized polydisc is a non-convex but polynomially convex domain for all $n$. Also $\Gn$ is a starlike domain (see \cite{pal-roy 4}).

Let $B_1,\dots, B_k$ be $2 \times 2$ strictly contractive matrices such that $\det B_1= \det B_2 = \cdots = \det B_k$. We define two functions $\pi_{2k+ 1}$ and $\pi_{2k}$ in the following way:
\[
\pi_{2k+1} \lf B_1,\dots, B_k \rf = \Bigg( {n \choose 1} [B_1]_{11},   \dots , {n \choose k} [B_k]_{11},  {n \choose k} [B_k]_{22}, \dots, {n \choose 1} [B_1]_{22} , \det B_1 \Bigg)
\]
%\begin{align*}
%&\pi_{2k+1} \lf B_1,\dots, B_k \rf \\
%&= \Bigg( {n \choose 1} [B_1]_{11},   \dots , {n \choose k} [B_k]_{11},  {n \choose k} [B_k]_{22}, {n \choose k-1} [B_{k-1}]_{22}  \dots, {n \choose 1} [B_1]_{22} , \det B_1 \Bigg) 
%\end{align*}
and
\begin{align*}
\pi_{2k} \lf B_1,\dots, B_k \rf
=\Bigg( {n \choose 1} [B_1]_{11}, \dots , & {n \choose k-1} [B_{k-1}]_{11}, {n \choose k} \df{\lf [B_k]_{11} + [B_k]_{22} \rf}{2},\\ & \qq {n \choose k-1} [B_{k-1}]_{22}, \dots, {n \choose 1} [B_1]_{22}, \det B_1  \Bigg).
\end{align*}
Then, by Theorem $2.5$ of \cite{pal-roy 4}, we have
$\pi_{2k} \lf B_1,\dots, B_k \rf \in \widetilde{\mathbb G}_{2k} \text{ and } \pi_{2k+1} \lf B_1,\dots, B_k \rf \in \widetilde{\mathbb G}_{2k+1}$. 

For $0 \neq \lm \in \D$ and $y \in \Gn$, the Schwarz lemma for $\Gn$ describes the necessary conditions for the existence of an analytic interpolating function from the unit disc to $\Gn$ mapping the origin to the origin and $\lm$ to $y$. 
In \cite{pal-roy 5}, it is shown that unlike the classical Schwarz Lemma there is no uniqueness statement for the interpolating function in the case of $\Gn$. In the next section we describe the class of such analytic interpolating functions for the Schwarz lemma for $\Gn$. Note that, in \cite{pal-roy 2}, another version of Schwarz lemma for $\Gn$ was presented.
%In section $2$ we describe the class of analytic interpolating functions for the Schwarz lemma for $\Gn$. 
In section $3$ we present the relation of $\Gn$ with the $\mu$-synthesis problem. In section $4$ we take a step towards the Lempert Theory, we find a class of points for which the Carath\'{e}odory pseudo-distance and the Lempert function from the origin coincide.
Note that the techniques that are used here are similar to the paper \cite{awy}.

%We recall the Schwarz Lemma for $\Gn$. We have proved a Schwarz lemma for $\gn$ in \cite{pal-roy 5}, using the Schwarz Lemma for $\Gn$, which generalizes the Schwarz lemma for the symmetrized bidisc presented in \cite{AY-BLMS} (also see \cite{pal-roy 1}).
%\begin{thm}[\cite{pal-roy 5}] \label{Schwarz Gn}
%	Let $\lm_0 \in \D \; \backslash \; \{0\}$ and let $ y^0= (y_1^0,\dots,y_{n-1}^0,q^0) \in \Gn$. Then in the set of following conditions, $(1)$ implies each of $(2)-(4)$.
%	\item[$(1)$]
%	There exists an analytic function $\vp  :  \D \rightarrow  \Gn $ such that  $\; \vp(0) = (0,\dots,0) $ and $\; \vp(\lm_0) = y^0 $.
%	\item[$(2)$]
%	\[
%	\max_{1\leq j \leq n-1} \ls \n \Phi_j(.,y^0)\n_{H^{\infty}} \rs \leq |\lm_0| \, .
%	\]
%	
%	\item[$(3)$]
%	For each $j =1 , \dots,\left[\frac{n}{2}\right] $ the following hold
%	\[
%	\begin{cases}
%	\n \Phi_j(.,y^0) \n_{H^{\infty}} \leq |\lm_0|  \q & \text{if } \; |y_{n-j}^0|\leq |y_j^0| \\
%	\n \Phi_{n-j}(.,y^0) \n_{H^{\infty}} \leq |\lm_0| \q & \text{if } \; |y_j^0|\leq |y_{n-j}^0| \, .
%	\end{cases}\]
%	
%	
%	\item[$(4)$]  There exist functions $F_1, F_2, \dots , F_{\left[\frac{n}{2}\right]}$ in the Schur class such that
%	$F_j(0) =
%	\begin{bmatrix}
%	0 & * \\
%	0 & 0
%	\end{bmatrix} ,\:$
%	and $\; F_j(\lm_0) = B_j$, for $j = 1, \dots, \left[\frac{n}{2}\right]$, where $\det B_1= \cdots = \det B_{[\frac{n}{2}]}= q^0$,
%	$y_j^0 = \nj [B_j]_{11}$ and $y_{n-j}^0 = \nj [B_j]_{22}$.
%	
%	Furthermore, if $ y^0 \in \mathcal J_n $ all the conditions $(1)-(4)$ are equivalent. % So in particular the converse holds for $n=1,2,3$.
%\end{thm}

\section{Interpolating functions}
The Schwarz Lemma for $\Gn$ tells us that for $0 \neq \lm_0 \in \D$ and $y^0 =(y_1^0,\dots,y_{n-1}^0,q^0) \in \Gn$, one of the necessary condition for the existence of an analytic interpolating functions $\vp : \D \longrightarrow \Gn$ such that $\vp(0)=(0,\dots,0)$ and $\vp(\lm_0)=y^0$ is for each $j =1 , \dots,n-1 $, $\n \Phi_j(.,y^0) \n_{H^{\infty}} \leq |\lm_0|$ whenever $|y_{n-j}^0|\leq |y_j^0|$. Note that this result generalizes the Schwarz lemma for the symmetrized bidisc $\mathbb G_2$ presented in \cite{AY-BLMS}. In this section we describe the explicit form of such holomorphic interpolating function $\vp$, assuming its existence, whenever $|y_{n-j}^0|\leq |y_j^0|$ and $\n \Phi_j(.,y^0) \n_{H^{\infty}} < |\lm_0|$. Theorem $\ref{interpolating 1}$ is one of the main results of this article, where we portray explicitly how those $\vp$ looks like.

The Schwarz Lemma for $\gn$ and $\Gn$ only talks about the existential criteria of an interpolating function. But it is important to describe the nature of such interpolating functions. In \cite{AY-BLMS}, only an example of an interpolating function with respect to the Schwarz lemma for $\mathbb G_2$ was presented assuming the condition $\n \Phi_1(.,y^0) \n_{H^{\infty}} = |\lm_0|$. In \cite{pal-roy 2}, the author and Pal produced an example of an interpolating function related with an other version of Schwarz lemma for $\Gn$ mentioned therein. Here, instead of a particular example, we found the explicit form of all holomorphic interpolating functions arising in both the Schwarz Lemma for $\gn$ and $\Gn$. Note that, as $\widetilde{\mathbb G}_2 = \mathbb G_2$, Theorem $\ref{Interpolating Function 2}$ provides description of interpolating functions with respect to the Schwarz lemma for $\mathbb G_2$ for the condition $\n \Phi_1(.,y^0) \n_{H^{\infty}} < |\lm_0|$.
We start this section with the following important lemma. \\
%If $0 \neq \lm \in \D$ and $y \in \Gn$, we describe all holomorphic interpolating functions $\varphi : \D \longrightarrow \Gn$, whenever exists, such that $\varphi(0)=0$ and $\varphi(\lm)=y$. Theorem $\ref{interpolating 1}$ is one of the main results of this paper, to prove that we make use of the following lemma.

Let $Z \in \C^{2\times 2}$ be such that $\n Z \n < 1$ and let $0 \leq \rho < 1$. Let
\begin{equation}{\label{M-rho}}
\mathcal{K}_Z(\rho) =\begin{bmatrix}
[(1-\rho^2 Z^*Z)(1-Z^*Z)^{-1}]_{11} & [(1- \rho^2)(1 - ZZ^*)^{-1}Z]_{21} \\
[(1 - \rho^2)Z^*(1 - ZZ^*)^{-1}]_{12} & [(ZZ^* - \rho^2)(1 - ZZ^*)^{-1}]_{22}
\end{bmatrix}.
\end{equation}

\begin{lemma}\label{prep interpolation 1}
	Let $\lm_0 \in \D \setminus \{0\}$ and $y^0=(y_1^0,\dots,y_{n-1}^0,q^0)\in \Gn$. For a $j\in \left\{1,\dots,n-1\right\}$,
	suppose $y_j^0y_{n-j}^0\neq \nj^2q^0$, $|y_{n-j}^0|\leq |y_j^0|$ and $ \n \Phi_j(.,y^0) \n <|\lm_0|$. For any $\sg > 0$ let
	\begin{equation}\label{Z-sigma-j}
	Z_{\sg,j} = \begin{bmatrix}
	y_j^0/\nj\lm_0 & \sg w_j \\
	\\
	w_j/\sg & y_{n-j}^0/\nj
	\end{bmatrix}
	\end{equation}
	where $w_j^2 = \df{y_j^0y_{n-j}^0 - \nj^2q^0}{\nj^2\lm_0}$. Let $\mathcal{K}_{Z_{\sg,j}}$ be defined by the equation $\eqref{M-rho}$. Also let $\theta_j, \vartheta_j$ be the roots of the equation
	\[z + 1/z = \frac{|\lm_0|}{|y_j^0 y_{n-j}^0 - \nj^2 q^0|}\left( \nj^2 - \frac{|y_j^0|^2}{|\lm_0|^2} -
	|y_{n-j}^0|^2  + \frac{\nj^2 |q^0|^2}{|\lm_0|^2} \right).\] 
	Then, $\n Z_{\sg,j} \n < 1$ if and only if 
	\begin{equation}{\label{range-theta-j}}
	\theta_j < \sg^2 < \vartheta_j.
	\end{equation}
	Also $\mathcal{K}_{Z_{\sg,j}}(|\lm_0|)$ is not positive definite whenever $\sg$ satisfies condition $\eqref{range-theta-j}$.
%	Then for any $\sg$ such that
%	\begin{equation}{\label{range-theta-j}}
%	\theta_j < \sg^2 < \vartheta_j
%	\end{equation}
%	we have $\n Z_{\sg,j} \n < 1$ and $\mathcal{K}_{Z_{\sg,j}}(|\lm_0|)$ is not positive definite. Also if $\n Z_{\sg,j} \n < 1$, then $\sg$ satisfies condition $\eqref{range-theta-j}$.
\end{lemma}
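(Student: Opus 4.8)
The plan is to analyze the $2\times 2$ matrix $Z_{\sg,j}$ by exploiting the fact that its entries are arranged precisely so that the relevant determinant and characteristic data factor through the invariants $y_j^0, y_{n-j}^0, q^0, \lm_0$. First I would compute $\det Z_{\sg,j}$: by construction $w_j^2 = (y_j^0 y_{n-j}^0 - \nj^2 q^0)/(\nj^2 \lm_0)$, so the off-diagonal product is $\sg w_j \cdot w_j/\sg = w_j^2$, and hence $\det Z_{\sg,j} = \df{y_j^0 y_{n-j}^0}{\nj^2 \lm_0} - w_j^2 = q^0$, independent of $\sg$. This is the key structural observation: the entire $\sg$-dependence sits only in the off-diagonal entries, not in the diagonal entries or in the determinant.

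Next I would characterize $\n Z_{\sg,j}\n < 1$. For a $2\times 2$ matrix $Z$, $\n Z\n < 1$ iff $I - Z^*Z > 0$, equivalently iff $\det(I - Z^*Z) > 0$ together with $\mathrm{tr}(I - Z^*Z) > 0$ (or: $1 - \n Z\n_{HS}^2 + |\det Z|^2 > 0$ and $1 > |\det Z|^2$, via the identity $\det(I - Z^*Z) = 1 - \n Z\n_{HS}^2 + |\det Z|^2$, where $\n Z\n_{HS}^2 = |Z_{11}|^2 + |Z_{12}|^2 + |Z_{21}|^2 + |Z_{22}|^2$). Since $|\det Z_{\sg,j}| = |q^0| < 1$ automatically (as $y^0 \in \Gn$), the condition reduces to $\det(I - Z_{\sg,j}^* Z_{\sg,j}) > 0$, i.e.
\[
1 + |q^0|^2 - \frac{|y_j^0|^2}{\nj^2|\lm_0|^2} - \sg^2 |w_j|^2 - \frac{|w_j|^2}{\sg^2} - \frac{|y_{n-j}^0|^2}{\nj^2} > 0.
\]
Dividing by $|w_j|^2 = |y_j^0 y_{n-j}^0 - \nj^2 q^0|/(\nj^2 |\lm_0|)$ and rearranging turns this into $\sg^2 + \sg^{-2} < R$ where $R$ is exactly the right-hand side of the displayed equation defining $\theta_j, \vartheta_j$. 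Since $\theta_j \vartheta_j = 1$ (product of roots of $z + 1/z = R$, i.e. $z^2 - Rz + 1 = 0$), the inequality $\sg^2 + \sg^{-2} < R$ holds iff $\sg^2$ lies strictly between the two roots, giving $\eqref{range-theta-j}$. I should also check that $R > 2$ so that the roots are real and positive — this should follow from the hypothesis $\n\Phi_j(\cdot,y^0)\n < |\lm_0| < 1$ via the AY-type characterization of $\Gn$ (Theorems 2.5/2.7 of \cite{pal-roy 4}), and is exactly the point where $\n\Phi_j\n < |\lm_0|$ (rather than $\le$) buys us the strict, nonempty interval.

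For the last assertion, I would examine $\mathcal{K}_{Z_{\sg,j}}(|\lm_0|)$ as defined in $\eqref{M-rho}$. The claim is that it fails to be positive definite; since it is a $2\times 2$ Hermitian matrix, it suffices to show its determinant is $\le 0$ (its trace being positive is automatic from $\n Z_{\sg,j}\n<1$). The natural route is to compute $\det \mathcal{K}_Z(\rho)$ in closed form using the $2\times2$ matrix identities for $(1-Z^*Z)^{-1}$ and $(1-ZZ^*)^{-1}$ — both have the same determinant $\det(I - Z^*Z) = \det(I-ZZ^*)$ — and to observe that with $\rho = |\lm_0|$ and the specific entries of $Z_{\sg,j}$, the off-diagonal terms $[(1-\rho^2)(1-ZZ^*)^{-1}Z]_{21}$ and $[(1-\rho^2)Z^*(1-ZZ^*)^{-1}]_{12}$ are complex conjugates of a common expression whose modulus squared, when $\sg^2$ lies in the open interval $(\theta_j,\vartheta_j)$, dominates the product of the two diagonal entries; equivalently, the $\sg$-dependence of $\det\mathcal{K}_{Z_{\sg,j}}(|\lm_0|)$ is such that it stays $\le 0$ throughout the admissible range, with equality only at the endpoints. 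Concretely I expect $\det \mathcal{K}_{Z_{\sg,j}}(|\lm_0|)$ to reduce, after clearing the common factor $\det(I - Z_{\sg,j}^*Z_{\sg,j})^{-2}$ (which is positive), to an affine expression in $\sg^2 + \sg^{-2}$ that is a positive multiple of $R - (\sg^2 + \sg^{-2})$ times a negative constant, hence nonpositive exactly on the closure of $(\theta_j,\vartheta_j)$; this is the cleanest way the two halves of the lemma link up.

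The main obstacle will be the bookkeeping in this last determinant computation: writing out the $(1,1)$, $(2,2)$ entries of $\mathcal{K}_Z(\rho)$ and the two off-diagonal entries in terms of the entries $a = y_j^0/\nj\lm_0$, $d = y_{n-j}^0/\nj$, $b = \sg w_j$, $c = w_j/\sg$ of $Z_{\sg,j}$, and the scalar $\det(I - Z^*Z)$, then collecting everything over a common denominator and recognizing the numerator as a multiple of $R - (\sg^2+\sg^{-2})$. I would organize this by first recording the general $2\times2$ formulas $(1 - ZZ^*)^{-1} = \frac{1}{\delta}(I - ZZ^* + \text{adj terms})$ with $\delta = \det(I-ZZ^*)$, extracting the four relevant matrix entries symbolically, and only then substituting the concrete values — postponing the use of $w_j^2 = (y_j^0 y_{n-j}^0 - \nj^2 q^0)/(\nj^2\lm_0)$ until the final simplification, where it collapses the expression. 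Everything else (the determinant/trace criterion for $\n\cdot\n<1$, Vieta on $z^2 - Rz + 1$, the positivity of the trace of $\mathcal{K}$) is routine.
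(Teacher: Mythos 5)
Your overall strategy for the first assertion runs parallel to the paper's (compute $\det(1-Z_{\sg,j}^*Z_{\sg,j})$ explicitly, divide by $|w_j|^2$, apply Vieta to $z^2-Xz+1=0$), but the step by which you collapse the norm condition to a single inequality contains a concrete error. You compute $\det Z_{\sg,j}=q^0$; in fact
\[
\det Z_{\sg,j}=\frac{y_j^0y_{n-j}^0}{\nj^2\lm_0}-w_j^2=\frac{q^0}{\lm_0},
\]
so $|\det Z_{\sg,j}|=|q^0|/|\lm_0|$, and the claim that $|\det Z_{\sg,j}|<1$ holds ``automatically because $y^0\in\Gn$'' is unjustified: it needs $|q^0|<|\lm_0|$, which does not follow from $y^0\in\Gn$ alone. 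Your entire reduction of $\|Z_{\sg,j}\|<1$ to $\det(1-Z_{\sg,j}^*Z_{\sg,j})>0$ rests on this bound (without it, both singular values could exceed $1$ while the determinant of $1-Z^*Z$ stays positive), so as written the argument fails. It is repairable — one can show $\|\Phi_j(\cdot,y^0)\|\ge|q^0|$ using $|y_{n-j}^0|\le|y_j^0|$ and $|y_{n-j}^0|<\nj$, whence $|q^0|<|\lm_0|$ — but the paper instead applies Sylvester's criterion (positivity of the $(2,2)$ entry of $1-Z_{\sg,j}^*Z_{\sg,j}$ plus positivity of its determinant), which produces the extra condition $\sg^2<R_j$ and then a separate argument ($R_j>1$ together with $R_j+1/R_j>X_{n-j}$, hence $R_j>\vartheta_j$) showing this condition is subsumed by $\theta_j<\sg^2<\vartheta_j$. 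You need one of these two repairs. Your deferred check that the right-hand side $X_{n-j}$ exceeds $2$ is indeed obtained exactly as you guess, from $\|\Phi_j(\cdot,y^0)\|<|\lm_0|$ via the equivalence of conditions $(4)$ and $(6)$ in Theorem $2.5$ of \cite{pal-roy 4}, but it must be written out since it is where the strictness of the hypothesis is used.

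For the second assertion your plan (show $\det\mathcal{K}_{Z_{\sg,j}}(|\lm_0|)\le0$) is the paper's, but the closed form you predict is not what the computation yields: after clearing the positive factor $\det(1-Z_{\sg,j}^*Z_{\sg,j})$, the determinant is not an affine function of $\sg^2+\sg^{-2}$ but a product of \emph{two} linear factors, $-(l-k_j)(l-k_{n-j})$ with $l$ proportional to $\sg^2+\sg^{-2}$ and $k_j,k_{n-j}$ proportional to $X_j,X_{n-j}$. Concluding strict negativity on $\theta_j<\sg^2<\vartheta_j$ then requires the additional observation $X_j\ge X_{n-j}$ — this is precisely where the hypothesis $|y_{n-j}^0|\le|y_j^0|$ enters, and it is absent from your sign analysis. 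The computation would still go through if carried out, but the mechanism is a two-factor sign count, not domination by a single factor $X_{n-j}-(\sg^2+\sg^{-2})$.
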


\begin{proof}
	First note that
	\begingroup
	\allowdisplaybreaks
	\begin{align}\label{mat 1}
	1-Z_{\sg,j}^*Z_{\sg,j} =\begin{bmatrix}
	1-\df{|y_j^0|^2}{\nj^2|\lm_0|^2} - \df{|w|^2}{\sg^2}  & -\df{\bar{y}_j^0\sg w}{\nj \bar{\lm}_0}- \df{y_{n-j}^0\bar{w}}{\nj \sg}   \\
	\\
	-\df{y_j^0\sg \bar{w}}{\nj \lm_0}- \df{\bar{y}_{n-j}^0w}{\nj \sg} & 1 - \sg^2|w|^2 -\df{|y_{n-j}^0|^2}{\nj^2}
	\end{bmatrix}.
	\end{align}
	\endgroup
	Then we have %(see the Appendix for the calculation)
	\begin{equation}\label{app-2}
	\det(1-Z_{\sg,j}^*Z_{\sg,j})=
	1 - \df{|y_j^0|^2}{\nj^2|\lm_0|^2} - \df{|y_{n-j}^0|^2}{\nj^2} + \df{|q^0|^2}{|\lm_0|^2} - \df{|y_j^0y_{n-j}^0-\nj^2q^0|}{\nj^2|\lm_0|}(\sg + 1/\sg^2).
	\end{equation}
	Thus, $\n Z_{\sg,j} \n  < 1$ if and only if
	\begin{align*}
	& 1 - \df{|y_{n-j}^0|^2}{\nj^2} - \sg^2\df{|y_j^0y_{n-j}^0-\nj^2q^0|}{\nj^2|\lm_0|} > 0  \\
	\text{and} \qq
	& 1 -\df{|y_j^0|^2}{\nj^2|\lm_0|^2} - \df{|y_{n-j}^0|^2}{\nj^2} + \df{|q^0|^2}{|\lm_0|^2} - \df{|y_j^0y_{n-j}^0-\nj^2q^0|}{\nj^2|\lm_0|}(\sg^2 + 1/\sg^2) > 0,
	\end{align*}
	that is, if and only if
	\begin{align*}
	& \sg^2 < \df{\nj^2|\lm_0|\left(1 - \dfrac{|y_{n-j}^0|^2}{\nj^2} \right)}{|y_j^0y_{n-j}^0 - \nj^2q^0|} \\
	\text{and} \qq
	& \sg^2 + 1/\sg^2 < \dfrac{ \nj^2|\lm_0| \lf 1 -\df{|y_j^0|^2}{\nj^2|\lm_0|^2} - \df{|y_{n-j}^0|^2}{\nj^2} + \df{|q^0|^2}{|\lm_0|^2} \rf}{|y_j^0y_{n-j}^0-\nj^2q^0|}.
	\end{align*}
	Let
	\begin{equation}{\label{defn-K}}
	R_j \equiv \df{\nj^2|\lm_0|\left(1 - \dfrac{|y_{n-j}^0|^2}{\nj^2} \right)}{|y_j^0y_{n-j}^0 - \nj^2q^0|} = \df{|\lm_0|(\nj^2 - |y_{n-j}^0|^2)}{|y_j^0y_{n-j}^0 - \nj^2q^0|}.
	\end{equation}
	
	By hypothesis, we have
	\begin{align}\label{D1 to 5}	
	\nonumber & \dfrac{\nj \left|y_j^0 - \bar y_{n-j}^0 q^0 \right| + \left|y_j^0 y_{n-j}^0 - \nj^2 q^0 \right|}{\nj^2 -|y_{n-j}^0|^2} = \n \Phi_j(.,y^0) \n < |\lambda_0| \\
	\imp & \nj \left|\dfrac{y_j^0}{\lm_0} - \bar y_{n-j}^0 \dfrac{q^0}{\lm_0}\right| + \left|\dfrac{y_j^0}{\lm_0} y_{n-j}^0 - \nj^2 \dfrac{q^0}{\lm_0} \right| < \nj^2 -|y_{n-j}^0|^2 .
	\end{align}
	
	Then, using the Theorem $2.5$ of \cite{pal-roy 4} (equivalence of conditions $(4)$ and $(6)$), we have
%	\begin{equation*}
%	\dfrac{|y_j^0|^2}{|\lm_0|^2} + | y_{n-j}^0 |^2 - \nj^2\dfrac{| q^0 |^2}{|\lm_0|^2} + 2\dfrac{\left| y_j^0 y_{n-j}^0 - \nj^2 q^0 \right| }{|\lm_0|} < \nj^2,
%	\end{equation*}
%	that is,
	\begin{equation}\label{equ for Y-1}
	\nj^2 - \dfrac{|y_j^0|^2}{|\lm_0|^2} - | y_{n-j}^0 |^2 + \nj^2\dfrac{| q^0 |^2}{|\lm_0|^2} > 2\dfrac{\left| y_j^0 y_{n-j}^0 - \nj^2 q^0 \right| }{|\lm_0|}.
	\end{equation}
	Since $|y_{n-j}^0| \leq |y_j^0|$ and $\lm_0 \in \D$, we have $\lf \dfrac{|y_j^0|^2 }{|\lm_0|^2} + |y_{n-j}^0|^2 \rf \geq \lf |y_j^0|^2 + \dfrac{|y_{n-j}^0|^2}{|\lm_0|^2} \rf$.
%	\[
%	\lf \dfrac{|y_j^0|^2 }{|\lm_0|^2} + |y_{n-j}^0|^2 \rf - \lf |y_j^0|^2 + \dfrac{|y_{n-j}^0|^2}{|\lm_0|^2} \rf = \lf \dfrac{1}{|\lm_0|^2} - 1 \rf \lf |y_j^0|^2 - |y_{n-j}^0|^2 \rf \geq 0.
%	\]
	Hence
	\begingroup
	\allowdisplaybreaks
	\begin{align}\label{equ for Y-2}
	\nonumber \nj^2 - |y_j^0|^2 - \dfrac{| y_{n-j}^0 |^2}{|\lm_0|^2} + \nj^2\dfrac{| q^0 |^2}{|\lm_0|^2}
	\geq & \nj^2 - \dfrac{|y_j^0|^2}{|\lm_0|^2} - | y_{n-j}^0 |^2 + \nj^2\dfrac{| q^0 |^2}{|\lm_0|^2} \\
	> & 2\dfrac{\left| y_j^0 y_{n-j}^0 - \nj^2 q^0 \right| }{|\lm_0|}.
	\end{align}
	\endgroup
	Let,
	$\qq X_j := \dfrac{|\lm_0|}{|y_j^0 y_{n-j}^0 - \nj^2 q^0|}\left( \nj^2 - |y_j^0|^2 -\dfrac{|y_{n-j}^0|^2}{|\lm_0|^2} + \dfrac{\nj^2 |q^0|^2}{|\lm_0|^2} \right)$ \\
	and $\qq X_{n-j} := \dfrac{|\lm_0|}{|y_j^0 y_{n-j}^0 - \nj^2 q^0|}\left( \nj^2 - \dfrac{|y_j^0|^2}{|\lm_0|^2} -
	|y_{n-j}^0|^2  + \dfrac{\nj^2 |q^0|^2}{|\lm_0|^2} \right) \vspace{0.3cm}$.\\
	Then we have, $X_j > 2$ and $X_{n-j} > 2$.
	Then, $\n Z_{\sg,j} \n  < 1$ if and only if
	$$ \sg^2 < R_j \q \textrm{and} \q \sg^2 + 1/ \sg^2 < X_{n-j}.$$
	Since $|y_{n-j}^0| \leq |y_j^0|$, we have $y_j^0 \neq  \bar{y}_{n-j}^0q^0$. Otherwise, $y_j^0 = \bar{y}_{n-j}^0q^0$ and hence $|y_j^0|=|\bar{y}_{n-j}^0||q^0|<|y_{n-j}^0|$, a contradiction. By hypothesis $\n \Phi_j(.,y^0) \n < |\lm_0|$. Then 
	$$\left|y_j^0y_{n-j}^0 - \nj^2q^0\right| = \left(\nj^2 - |y_{n-j}^0|^2\right) \n \Phi_j(.,y^0) \n < |\lm_0|\left(\nj^2 - |y_{n-j}^0|^2\right).$$

	Hence $ R_j > 1$. 
	Notice that %(for calculation see the Appendix)                 % calculation in the appendix can be written here
	\begin{equation}\label{app-3}
	R_j + 1/R_j - X_{n-j}
	= \dfrac{\nj^2|y_j^0 - \bar{y}_{n-j}^0q^0|^2}{|\lm_0|(\nj^2 - |y_{n-j}^0|^2)|y_j^0y_{n-j}^0 - \nj^2q^0|} > 0.
	\end{equation}
	Hence
	\begin{equation}{\label{cond-K-Y}}
	R_j + 1/R_j > X_{n-j}.
	\end{equation}
	The graph of the function $f(x) = x+ \df{1}{x} $ is the following :
	\begin{figure}[ht!]
		\centering
		\includegraphics[width=70mm]{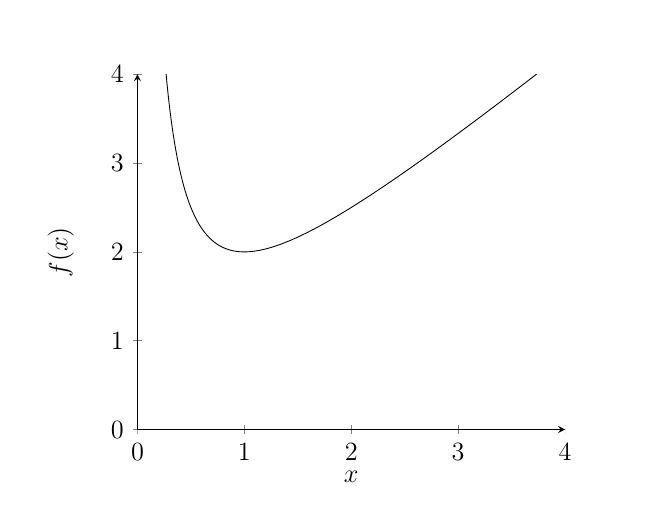}
		%\caption{A simple caption \label{overflow}}
	\end{figure}\\
	%   \begin{tikzpicture}
	%   \begin{axis}[
	%   axis lines = left,
	%   xlabel = $x$,
	%   ylabel = {$f(x)$},
	%   xmin = 0, xmax = 4,
	%   ymin = 0, ymax = 4,
	%   %unbounded coords=jump
	%   ]
	%   \addplot [
	%   %domain=0:10,
	%   samples=1000,
	%   ]
	%   {x+1/x};
	%   \end{axis}
	%   \end{tikzpicture}  \\
	The function $f$ is continuous and has a point of minima at $x=1$ and $f(1)=2$. Since $X_{n-j} > 2$,
	the line $x=X_{n-j}$ intercepts the graph of $f$ twice. Note that the equation
	\[z + 1/z = \frac{|\lm_0|}{|y_j y_{n-j} - \nj^2 q|}\left( \nj^2 - \frac{|y_j|^2}{|\lm_0|^2} -
	|y_{n-j}|^2  + \frac{\nj^2 |q|^2}{|\lm_0|^2} \right)\]
	is same as the equation $z + 1/z =X_{n-j}$.
	Since $\theta_j$ and $\vartheta_j$ are two solutions of the equation $z + 1/z =X_{n-j}$, it is clear from the above figure that $ \xi + 1/ \xi < X_{n-j} $ if and only if $\theta_j < \xi < \vartheta_j $.
	From the fact $\theta_j < 1 < \vartheta_j$, $R_j>1$ and from equation $\eqref{cond-K-Y}$, we have $R_j > \vartheta_j$. 
	%	If $\xi + 1/ \xi < X_{n-j}$, then $\theta_j < \xi < \vartheta_j $ and hence $\xi < R_j$.
	%   Hence $\xi < Q$ when $\theta_j < \xi < \vartheta_j $. Equivalently $\xi < Q$ holds automatically whenever $\xi + 1/ \xi < X_{n-j}$.
	
	If $\theta_j < \sg^2 < \vartheta_j$, then $\sg^2 < R_j$ and $\sg^2 + 1/ \sg^2 < X_{n-j}$. Hence $\n Z_{\sg,j}\n < 1$ whenever $\theta_j < \sg^2 < \vartheta_j$. Again  $\n Z_{\sg,j}\n < 1$ implies $\sg^2 + 1/ \sg^2 < X_{n-j}$, which implies $\theta_j < \sg^2 < \vartheta_j$.
	Therefore, $\n Z_{\sg,j}\n < 1$ if and only if $\theta_j < \sg^2 < \vartheta_j$.\\
	
	Next we show that, $\det \mathcal{K}_{Z_{\sg,j}}(|\lm_0|) < 0$ whenever $\theta_j < \sg^2 < \vartheta_j$.
	First note that :
	\begin{align}\label{appendix 1}
	\nonumber & \mathcal{K}_{Z_{\sg,j}}(|\lm_0|) \\
	&=\begin{bmatrix}
	[(1-|\lm_0|^2 Z_{\sg,j}^*Z_{\sg,j})(1-Z_{\sg,j}^*Z_{\sg,j})^{-1}]_{11} & [(1- |\lm_0|^2)(1 - Z_{\sg,j} Z_{\sg,j}^*)^{-1}Z_{\sg,j}]_{21} \\
	\\
	[(1 - |\lm_0|^2)Z_{\sg,j}^*(1 - Z_{\sg,j}Z_{\sg,j}^*)^{-1}]_{12} & [(Z_{\sg,j}Z_{\sg,j}^* - |\lm_0|^2)(1 - Z_{\sg,j}Z_{\sg,j}^*)^{-1}]_{22}
	\end{bmatrix}.
	\end{align}
	Then by simple calculations, we have (see appendix)
	\begingroup
	\allowdisplaybreaks
	\begin{align}\label{appendix 1.1}
	\nonumber & \mathcal{K}_{Z_{\sg,j}}(|\lm_0|)\det (1-Z_{\sg,j}^*Z_{\sg,j}) = \\
	& \begin{bmatrix}
	1 - \dfrac{|y_j^0|^2}{\nj^2} - \dfrac{|y_{n-j}^0|^2}{\nj^2} + |q^0|^2  & \q &
	(1 - |\lm_0|^2)\Big( \dfrac{w}{\sg} + \dfrac{q^0}{\lm_0}\sg \bar{w} \Big) \\
	- \dfrac{|y_j^0y_{n-j}^0 - \nj^2q^0|}{\nj^2}
	\Big( \dfrac{|\lm_0|}{\sg^2} + \dfrac{\sg^2}{|\lm_0|} \Big) & \q & \q \\
	\\
	\q & \q &
	- |\lm_0|^2 + \dfrac{|y_j^0|^2}{\nj^2} + \dfrac{|y_{n-j}^0|^2}{\nj^2} - \dfrac{|q^0|^2}{|\lm_0|^2} \\
	(1 - |\lm_0|^2)\Big( \dfrac{\bar{w}}{\sg} + \dfrac{\bar{q}^0}{\bar{\lm}_0}\sg w \Big) & \q  & - \dfrac{|y_j^0y_{n-j}^0 - \nj^2q^0|}{\nj^2} \Big( \sg^2|\lm_0| + \dfrac{1}{\sg^2|\lm_0|} \Big)\\
	\end{bmatrix}.
	\end{align}
	\endgroup
	Now by a straight forward calculation %(see the Appendix)
	\begin{equation}\label{app-6}
	\det(\mathcal{K}_{Z_{\sg,j}}(|\lm_0|)\det (1-Z_{\sg,j}^*Z_{\sg,j})) = -(l_{\nu,j} - k_j)(l_{\nu,j} - k_{n-j}),
	\end{equation}
	where
	\begin{align*}
	l_{\nu,j} & = |y_j^0y_{n-j}^0 - \nj^2q^0|\Big( \sg^2 + \df{1}{\sg^2} \Big), \\
	k_j & = |y_j^0y_{n-j}^0 - \nj^2q^0|X_j.
	\end{align*}
	Since $|y_{n-j}^0| \leq |y_j^0|$,
	$$ X_j - X_{n-j} = \df{1 - |\lm_0|^2}{|(y_j^0y_{n-j}^0 - \nj^2q^0)\lm_0|}\Big( |y_j^0|^2 - |y_{n-j}^0|^2 \Big) \geq 0 $$
	and consequently $k_{n-j} \leq k_j$.
	If $\theta_j < \sg^2 < \vartheta_j$ then $\sg^2 + \df{1}{\sg^2} < X_{n-j}$ and hence $l_{\nu,j}< |y_j^0 y_{n-j}^0 - \nj^2q^0| X_{n-j} = k_{n-j} \leq k_j$.
	Thus $\det \mathcal{K}_{Z_{\sg,j}}(|\lm_0|) < 0$ when $\theta_j < \sg^2 < \vartheta_j$.
%	Hence for any $\sg$ satisfying condition $\eqref{range-theta-j}$ we have $\n Z_{\sg,j} \n < 1$ and $ \mathcal{K}_{Z_{\sg,j}}(|\lm_0|)$ is not positive definite.	
\end{proof}

The next theorem is one of the main results of this article, it describes the class of analytic interpolating functions from the unit disc to $\Gn$. We use two lemmas, Lemma $3.1$ and Lemma $3.2$, from \cite{awy}.

Let $Z$ be a strict $2\times 2$ matrix contraction and $\al \in \C^2  \setminus \{0\}$. A matricial M\"{o}bius transformation $\M_Z$ is defined as
\[
\M_Z(X) = (1- ZZ^*)^{-\frac{1}{2}} (X-Z)(1- Z^*X)^{-1} (1-Z^*Z)^{\frac{1}{2}}\;, \quad X\in \C^{2\times 2} \text{ and } \n X \n <1.
\]
Then $\M_Z$ is an automorphism of the close unit ball of $\C^{2 \times 2}$ which maps $Z$ to the zero matrix and $\M_{Z}^{-1}=\M_{-Z}$.
Also let
\begin{align}{\label{u-v}}
	& u_{Z}(\al) = (1-ZZ^*)^{-\frac{1}{2}}(\al_1Ze_1 + \al_2e_2),\\
	\nonumber & v_{Z}(\al) = -(1-Z^*Z)^{-\frac{1}{2}}(\al_1e_1 + \al_2Z^*e_2)
\end{align}
where $\{e_1 , e_2\}$ is the standard basis of $\C^2$.
Then for any $2\times 2$ matrix contraction $X$, by Lemma $3.1$ of \cite{awy},  $[\M_{-Z} (X)]_{22} = 0$ if and only if there exists $\al \in \C^2  \setminus \{0\} $ such that $ X^*u_{Z}(\al) = v_{Z}(\al).$

\begin{rem}
	Let $\lm_0 \in \D \setminus \{0\}$ and $y^0=(y_1^0,\dots,y_{n-1}^0,q^0)\in \Gn$. For a $j\in \left\{1,\dots,n-1\right\}$,
	suppose $y_j^0y_{n-j}^0\neq \nj^2q^0$, $|y_{n-j}^0|\leq |y_j^0|$ and $ \n \Phi_j(.,y^0) \n <|\lm_0|$. Then
	for any $\sg$ satisfying condition $\eqref{range-theta-j}$, using the last lemma and Lemma $3.1$ of \cite{awy} together,
	there exists $X\in \C^{2\times 2}$ such that $\n X \n \leq |\lm_0|$ and $[\M_{-Z_{\sg,j}} (X)]_{22} = 0$, where $Z_{\sg,j}$ is given by $\eqref{Z-sigma-j}$.
\end{rem}

Recall that the \textit{Schur class} of type $2 \times 2$ is the set of analytic functions $F$ on $\mathbb{D}$ with values in the space $\mathbb{C}^{2\times 2}$ such that $\lVert F(\lambda) \lVert \leq 1$ for all $\lambda \in \mathbb{D}$. Also we say that $F\in \mathcal{S}_{2 \times 2}$ if $\lVert F(\lambda) \lVert < 1$ for all $\lambda \in \mathbb{D}$.

%For $n \in \mathbb N$ and for $F_1,\dots, F_{[\frac{n}{2}]} \in S^{2 \times 2}$ such that $\det F_1 = \cdots = \det F_{[\frac{n}{2}]}$, define $\pi_n$ by
%\[
%\pi_n \big(F_1,\dots, F_{[\frac{n}{2}]}\big) =
%\begin{cases}
%\pi_{2[\frac{n}{2}]+1} \lf F_1,\dots, F_{[\frac{n}{2}]} \rf \q & \text{if } n \text{ is odd}\\
%\pi_{2[\frac{n}{2}]} \lf F_1,\dots, F_{[\frac{n}{2}]} \rf \q & \text{if } n \text{ is even}
%\end{cases}
%\]

\begin{thm}\label{interpolating 1}
	Let $\lm_0 \in \D \setminus \{0\}$ and $y^0=(y_1^0,\dots,y_{n-1}^0,q^0)\in \Gn$ and
	suppose $y_j^0y_{n-j}^0\neq \nj^2q^0$, $|y_{n-j}^0|\leq |y_j^0|$ and $ \n \Phi_j(.,y^0) \n <|\lm_0|$ for each $j =1 , \dots,\left[\frac{n}{2}\right] $. Suppose
	$\vp : \D \longrightarrow  \Gn $ is an analytic function such that $\; \vp(0) = (0,\dots,0) $ and $\; \vp(\lm_0) = y^0 $. Then
	\[
	\vp(\lm) =
	\begin{cases}
	\pi_{2[\frac{n}{2}]+1} \lf F_1(\lm),\dots, F_{[\frac{n}{2}]}(\lm) \rf \q & \text{if } n \text{ is odd}\\
	\pi_{2[\frac{n}{2}]} \lf F_1(\lm),\dots, F_{[\frac{n}{2}]}(\lm) \rf \q & \text{if } n \text{ is even},
	\end{cases}
	\]
%	 $\vp(\lm) = \pi_n \big(F_1(\lm),\dots, F_{[\frac{n}{2}]}(\lm)\big)$ 
	where for each $j =1 , \dots,\left[\frac{n}{2}\right] $,
	\begin{equation}{\label{F j}}
	F_j(\lm)
	= \mathcal{M}_{-Z_{\sg,j}} \Big( (BQ_j)(\lm) \Big)
	\begin{bmatrix}
	\lm & 0\\
	0 & 1
	\end{bmatrix},
	\end{equation}
	$Z_{\sg,j}$ is given by $\eqref{Z-sigma-j}$ and $\sg$ satisfies $\eqref{range-theta-j}$, $Q_j$ is a $2 \times 2$ Schur function such that
	\begin{equation}{\label{formulae-Q j}}
	Q_j(0)^*\bar{\lm}_0 u_{Z_{\sg,j}}(\alpha_j) = v_{Z_{\sg,j}}(\alpha_j),
	\end{equation}
	where $ u_{Z_{\sg,j}}(\alpha_j), v_{Z_{\sg,j}}(\alpha_j)$ are given by equation $\eqref{u-v}$ and $\alpha_j \in \C^2 \setminus  \{0\}$ satisfies
	\begin{equation}{\label{condition-alpha j}}
	\la \mathcal{K}_{Z_{\sg,j}}(|\lm_0|)\alpha_j,\alpha_j \ra \leq 0.
	\end{equation}	
\end{thm}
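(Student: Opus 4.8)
The plan is to realise $\vp$ as $\pi_{2[n/2]+1}$ (resp.\ $\pi_{2[n/2]}$) of a tuple of $2\times2$ Schur-class functions $F_1,\dots,F_{[n/2]}$ with a common determinant, and then to read off the shape of each $F_j$ from the two lemmas of \cite{awy} together with Lemma \ref{prep interpolation 1}. Writing $\vp=(\vp_1,\dots,\vp_{n-1},\vp_n)$ and setting $q:=\vp_n$, the definitions of $\pi_{2k+1}$ and $\pi_{2k}$ show that it suffices to produce, for each $j\in\{1,\dots,[n/2]\}$, an analytic $F_j:\D\to\C^{2\times2}$ with $\n F_j(\lm) \n\le1$ for all $\lm\in\D$, with $\det F_j\equiv q$, and with $[F_j]_{11}=\nj^{-1}\vp_j$, $[F_j]_{22}=\nj^{-1}\vp_{n-j}$ for $j<n/2$ (and, when $n$ is even and $j=n/2$, $\tfrac12([F_j]_{11}+[F_j]_{22})=\nj^{-1}\vp_j$); for then $\pi_{2[n/2]+1}(F_1,\dots,F_{[n/2]})=\vp$ when $n$ is odd, and $\pi_{2[n/2]}(F_1,\dots,F_{[n/2]})=\vp$ when $n$ is even, straight from those definitions. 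Since $\vp(0)=0$ and $\vp(\lm_0)=y^0$, such an $F_j$ must satisfy $[F_j(0)]_{11}=[F_j(0)]_{22}=\det F_j(0)=0$ and $[F_j(\lm_0)]_{11}=y_j^0/\nj$, $[F_j(\lm_0)]_{22}=y_{n-j}^0/\nj$, $\det F_j(\lm_0)=q^0$ (with the averaged condition at the middle index in the even case).

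Fix $j$. Finding such an $F_j$ is a two-point matricial interpolation problem in which the $(1,1)$ entry, the $(2,2)$ entry and the determinant are prescribed at both $0$ and $\lm_0$ — exactly the type of problem treated in \cite{awy}. After normalising $F_j$ so that its first column vanishes at $0$ (possible by the freedom in the off-diagonal entries, since $\det F_j(0)=0$ forces $[F_j(0)]_{12}[F_j(0)]_{21}=0$), Lemma $3.2$ of \cite{awy}, applied with $Z=Z_{\sg,j}$ and $\rho=|\lm_0|$, shows that $F_j$ must have the form $\M_{-Z_{\sg,j}}\!\big((BQ_j)(\lm)\big)\,\mathrm{diag}(\lm,1)$ for some $\sg$ satisfying \eqref{range-theta-j} — the range being nonempty by Lemma \ref{prep interpolation 1} — a scalar Blaschke factor $B$ vanishing at $\lm_0$ with $B(0)=\lm_0$, and a Schur-class function $Q_j$; here $Z_{\sg,j}$ is the matrix \eqref{Z-sigma-j}, whose $(1,1)$ and $(2,2)$ entries and determinant are the rescaled target values $y_j^0/\nj\lm_0$, $y_{n-j}^0/\nj$, $q^0/\lm_0$ and whose off-diagonal entries have product $w_j^2$, so that $Z_{\sg,j}$ encodes the data at $\lm_0$ after division of the first column by $\lm$. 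This is \eqref{F j}. The parameter $Q_j$ furnished by Lemma $3.2$ of \cite{awy} comes with a vector $\al_j\in\C^2\setminus\{0\}$ satisfying $\la\mathcal{K}_{Z_{\sg,j}}(|\lm_0|)\al_j,\al_j\ra\le0$ — and such a vector exists precisely because $\mathcal{K}_{Z_{\sg,j}}(|\lm_0|)$ is not positive definite, as established in Lemma \ref{prep interpolation 1} — which is \eqref{condition-alpha j}.

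Finally, $Q_j(0)$ is pinned down by the condition at the origin. Evaluating \eqref{F j} at $\lm=0$ produces a matrix with vanishing first column whose $(2,2)$ entry equals $[\M_{-Z_{\sg,j}}\!\big((BQ_j)(0)\big)]_{22}$, and this entry must vanish since $\vp_{n-j}(0)=0$. By Lemma $3.1$ of \cite{awy} this is equivalent to $(BQ_j)(0)^*u_{Z_{\sg,j}}(\al_j)=v_{Z_{\sg,j}}(\al_j)$ for some $\al_j\in\C^2\setminus\{0\}$ (which we take to be the vector chosen above), and since $B(0)=\lm_0$ — after absorbing a unimodular constant into $Q_j$ if need be — this reads $Q_j(0)^*\bar\lm_0\,u_{Z_{\sg,j}}(\al_j)=v_{Z_{\sg,j}}(\al_j)$, i.e.\ \eqref{formulae-Q j}. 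Combining the three steps settles both parities. The genuine obstacle is the middle step: proving that the a priori merely $\Gn$-valued curve $\vp$ actually lifts to analytic $2\times2$ Schur-class matrix functions $F_j$ whose values at $0$ and $\lm_0$ carry the data above, and that the lift can be arranged so as to fall under the framework of \cite{awy}. This requires the characterisation of $\Gn$ through the fractional linear maps $\Phi_j$ (Theorem $2.5$ of \cite{pal-roy 4}), uses the hypotheses $y_j^0 y_{n-j}^0\neq\nj^2q^0$, $|y_{n-j}^0|\le|y_j^0|$ and the strict inequality $\n \Phi_j(\cdot,y^0) \n<|\lm_0|$ to avoid degeneracies and to select $\Phi_j$ rather than $\Phi_{n-j}$, and — when $n$ is even — needs a separate argument to distribute the prescribed value $\nj^{-1}\vp_{n/2}$ holomorphically between the two diagonal entries of $F_{n/2}$.
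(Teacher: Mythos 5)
Your overall architecture matches the paper's: build $2\times 2$ Schur-class lifts $F_j$ of $\vp$, pass to $G_j(\lm)=F_j(\lm)\,\mathrm{diag}(\lm^{-1},1)$, and then quote Lemma $3.1$ and part $(2)$ of Lemma $3.2$ of \cite{awy} together with Lemma \ref{prep interpolation 1} to obtain the parametrization \eqref{F j}, \eqref{formulae-Q j}, \eqref{condition-alpha j}. But the step you set aside at the end as ``the genuine obstacle'' --- producing analytic matrix functions $F_j$ with $\n F_j(\lm)\n<1$ on $\D$, prescribed diagonal $\vp_j/\nj,\ \vp_{n-j}/\nj$ and determinant $\vp_n$, starting from the merely $\Gn$-valued curve $\vp$ --- is the main content of the proof, and you give no argument for it; saying it is ``exactly the type of problem treated in \cite{awy}'' does not supply one, since \cite{awy} starts from a Schur-class function already in hand. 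The paper fills this gap as follows: set $h_j=\vp_j\vp_{n-j}/\nj^2-\vp_n$, which is bounded and analytic with $h_j(0)=0$, take an inner-outer factorization $h_j=f_jg_j$ normalized so that $|\tilde f_j|=|\tilde g_j|$ almost everywhere on $\T$ and $g_j(0)=0$, and place $f_j,g_j$ in the off-diagonal slots of $F_j$. The equality of the boundary moduli is the decisive trick: it makes the diagonal entries and the determinant of $1-\tilde F_j^*\tilde F_j$ expressible purely in terms of $\tilde\vp$, and these are nonnegative a.e.\ on $\T$ by the boundary characterization of $\Gamn$ (conditions $(3)$, $(3')$ and $(5)$ of Theorem $2.5$ of \cite{pal-roy 4}); the maximum principle then yields $\n F_j(\lm)\n<1$ on all of $\D$, strictness coming from the fact that $F_j$ is non-constant (which uses $y_j^0y_{n-j}^0\neq\nj^2q^0$). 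Without some such construction your proof does not get off the ground.

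Two smaller points. Your worry about ``distributing'' $\vp_{n/2}$ between the two diagonal entries in the even case is unnecessary: for $j=n/2$ one has $y_j^0=y_{n-j}^0$, the same construction puts $\vp_{n/2}/{n\choose n/2}$ in both diagonal slots, and the average appearing in the definition of $\pi_{2k}$ is then automatic. Also, the parameter $\sg$ is not free a priori: it is forced to be $f_j(\lm_0)/w_j$ by the value $F_j(\lm_0)$, and condition \eqref{range-theta-j} is then \emph{deduced} from $\n Z_{\sg,j}\n=\n G_j(\lm_0)\n<1$ via Lemma \ref{prep interpolation 1}, rather than imposed at the outset.
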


\begin{proof}
	Suppose $\vp : \D \longrightarrow  \Gn $ is an analytic function such that $\vp(0)= (0,\dots,0)$ and $\vp(\lm_0) = (y_1^0,\dots,y_{n-1}^0,q^0)$. By Fatou's Lemma, $\vp$ has
	radial limits almost everywhere on $\mathbb{T}$. We denote the radial limit function
	of $\vp$ by $\tilde{\vp}$  which maps $\mathbb{T}$ almost everywhere to $\Gamn$.
	Let
	\[h_j(\lm) =\df{\vp_j(\lm)\vp_{n-j}(\lm)}{\nj^2} - \vp_n(\lm) \q \text{for } \lm \in \D .\]
	Then $h_j$ is a bounded analytic function on $\D$ (eventually a Schur function by condition $(4)$ of Theorem 2.5 of \cite{pal-roy 4}).
	By inner-outer factorization, there exist $f_j,g_j \in H^{\infty}$ such that
	$$
	h_j(\lm) = f_j(\lm)g_j(\lm) \q \text{for } \lm \in \D ,
	$$
	with $|\tilde{f_j}|=|\tilde{g_j}|$ almost everywhere on $\mathbb{T}$. As $f_jg_j(0)=0$, without loss of generality assume $g_j(0)=0$. Consider
	\begin{equation}
	F_j(\lm) = \begin{bmatrix}
	\dfrac{\vp_j(\lm)}{\nj} & f_j(\lm) \\
	\\
	g_j(\lm) & \dfrac{\vp_{n-j}(\lm)}{\nj}
	\end{bmatrix}
	\q \text{for } \lm \in \D .
	\end{equation}
	Clearly $\pi_n \big(F_1,\dots, F_{[\frac{n}{2}]}\big)= \vp$.
	Since $f_jg_j = \dfrac{\vp_j\vp_{n-j}}{\nj^2} - \vp_n$, we have $\det F_j = \vp_n$. Note that,
	\begin{equation*}
	1-F_j^*F_j =
	\begin{bmatrix}
	1-\dfrac{|\vp_j|^2}{\nj^2} - |g_j|^2 & & -f_j \dfrac{\bar{\vp}_j}{\nj} - \bar g_j \dfrac{\vp_{n-j}}{\nj} \\
	\\
	-\bar f_j\dfrac{\vp_j}{\nj} - g_j \dfrac{\bar{\vp}_{n-j}}{\nj} & & 1-\dfrac{|\vp_{n-j}|^2}{\nj^2} - |f_j|^2
	\end{bmatrix}
	\end{equation*}
	and
	$$ \det (1-F_j^*F_j) = 1-\df{|\vp_j|^2}{\nj^2} -\df{|\vp_{n-j}|^2}{\nj^2} +|\vp_n|^2 - |f|^2 - |g|^2 .$$
	Since $|\tilde{f}|=|\tilde{g}|$ almost everywhere on $\mathbb{T}$, we have
	$$
	[ 1- \tilde F_j^* \tilde F_j ]_{11}= 1-\df{|\tilde{\vp}_j|^2}{\nj^2} - \left|\df{\tilde{\vp}_j\tilde{\vp}_{n-j}}{\nj^2} - \tilde{\vp}_n\right|
	\, , \q
	[ 1- \tilde F_j^* \tilde F_j ]_{22} = 1-\df{|\tilde{\vp}_{n-j}|^2}{\nj^2} - \left|\df{\tilde{\vp}_j\tilde{\vp}_{n-j}}{\nj^2} - \tilde{\vp}_n \right|
	$$
	almost everywhere on $\mathbb{T}$ and
	$$ \det (1- \tilde F_j^* \tilde F_j) = 1-\df{|\tilde{\vp}_j|^2}{\nj^2} -\df{|\tilde{\vp}_{n-j}|^2}{\nj^2} + |\tilde{\vp}_n|^2 - \Big|\df{\tilde{\vp}_j\tilde{\vp}_{n-j}}{\nj^2} - \tilde{\vp}_n\Big| $$
	almost everywhere on $\mathbb{T}$. Then, using Theorem $2.5$ of \cite{pal-roy 4} (by conditions $(3)$, $(3')$ and $(5)$), we have
	\[
	\lt 1- \tilde F_j^* \tilde F_j \rt_{11}\geq 0, \; \lt 1-\tilde F_j^* \tilde F_j \rt_{22}\geq 0  \text{ and } \, \det (1- \tilde F_j^* \tilde F_j) \geq 0 \, \text{ almost everywhere on } \mathbb{T}.
	\]
	Thus $\n \tilde F_j \n \leq 1$ almost everywhere on $\mathbb{T}$. 
	%	   Thus by maximum modulus principle $\n F_j(\lm) \n \leq 1$ for all $\lm \in \overline{\D}$. 
	Note that if $F_j$ is constant then the function $(\vp_j, \vp_{n-j}, \vp_n)$ is also constant. Since $\vp(0)= (0,\dots,0)$, $\vp(\lm_0)= (y_1^0,\dots,y_{n-1}^0,q^0)$ and $y_j^0y_{n-j}^0 \neq \nj^2q^0$, we conclude $F_j$ is non-constant. Therefore by maximum modulus principle, $\n F_j(\lm) \n < 1$ for all $\lm \in \D$. Hence $ F_j \in S_{2 \times 2}$.
	
	Thus to complete the proof we need to show that $F_j$ can be written in the form $\eqref{F j}$ for some $\sg,\al_j$ and $Q_j$ satisfying $\eqref{range-theta-j},\eqref{condition-alpha j}$ and $\eqref{formulae-Q j}$ respectively.
	Since $y_j^0y_{n-j}^0 \neq \nj^2q^0$, we have $f_jg_j(\lm_0) = \dfrac{y_j^0 y_{n-j}^0}{\nj^2} -q^0 \neq 0$ and hence both $f_j(\lm_0)$ and $g_j(\lm_0)$ are nonzero.
	Now consider $\sg = \df{f_j(\lm_0)}{w_j}$ where $w_j^2 = \df{y_j^0y_{n-j}^0 - \nj^2q^0}{\nj^2\lm_0}$. Then $g_j(\lm_0)= \dfrac{\lm_0 w_j^2}{f_j(\lm_0)}=\df{\lm_0w_j}{\sg}$. Hence
	$$ F_j(\lm_0)
	=\begin{bmatrix}
	y_j^0/\nj &  \sg w_j\\
	\lm_0\sg^{-1}w_j & y_{n-j}^0/\nj
	\end{bmatrix}.$$
	We may assume that $\sg > 0$ (otherwise we can replace $F_j$ by $U^*F_jU$ for a suitable diagonal unitary matrix $U$). Consider
	$$ G_j(\lm)= F_j(\lm)\begin{bmatrix}
	\lm^{-1} & 0 \\
	0 & 1
	\end{bmatrix} \q \text{for all } \lm \in \D .$$
	Then we have $G_j \in S_{2 \times 2}$ and
	$$ [G_j(0)]_{22}=[F_j(0)]_{22} = 0 \q \text{and} \q
	G_j(\lm_0)
	= \begin{bmatrix}
	y_j^0/\nj\lm_0 &  \sg w_j\\
	\sg^{-1}w_j & y_{n-j}^0/\nj
	\end{bmatrix}
	= Z_{\sg,j}.
	$$
	Then $F_j(\lm)= G_j(\lm)\begin{bmatrix}
	\lm & 0 \\
	0 & 1
	\end{bmatrix}$ for all $\lm \in \D$. We have already proved that $\n Z_{\sg,j} \n < 1$
	if and only if $\theta_j < \sg^2 < \vartheta_j$. Since $G_j \in S_{2 \times 2}$, $\n Z_{\sg,j} \n = \n G_j(\lm_0)\n <1 $ and hence $\theta_j < \sg^2 < \vartheta_j$. That is, condition $\eqref{range-theta-j}$ holds. Again since $G_j \in S_{2 \times 2}$,  $[G_j(0)]_{22} = 0$ and $G_j(\lm_0) = Z_{\sg,j}$, by part-$(2)$ of Lemma $3.2$ of \cite{awy}, there exists some $\al_j \in \C^2 \setminus  \{0\}$ such that $\la \mathcal{K}_{Z_{\sg,j}}(|\lm_0|)\al_j,\al_j \ra \leq 0$ and a Schur function $Q_j$ such that $Q_j(0)^*\bar{\lm}_0 u_{Z_{\sg,j}}(\alpha_j) =  v_{Z_{\sg,j}}(\alpha_j)$ and
	$$G_j = \mathcal{M}_{-Z_{\sg,j}} \circ BQ_j,$$
	where $ u_{Z_{\sg,j}}(\alpha_j),  v_{Z_{\sg,j}}(\alpha_j)$ are given by equation $\eqref{u-v}$. Thus the conditions $\eqref{condition-alpha j}$ and $\eqref{formulae-Q j}$ are satisfied. Hence
	$F_j(\lm)= \mathcal{M}_{-Z_{\sg,j}} \lf BQ_j(\lm) \rf
	\begin{bmatrix}
	\lm & 0 \\
	0 & 1
	\end{bmatrix}$
	that is, $F_j$ can be written in the form $\eqref{F j}$. Hence the proof is complete.
\end{proof}

\begin{rem}
	For $0 \neq\lm \in \D $ and $x \in \gn$, any analytic function $\phi : \D \longrightarrow \gn$ such that $\phi(0)=(0,\cdots, 0)$ and $\phi(\lm)=x$ then $\phi$ can be viewed as a map $\phi : \D \longrightarrow \Gn$ (as $\gn \subset \Gn$) and by applying Theorem $\ref{interpolating 1}$ we can describe all such function $\phi$.
\end{rem}

%The conditions described in the Schwarz lemma for $\Gn$ are necessary for the existence of an analytic interpolating function $\vp : \D \longrightarrow \Gn $ satisfying  $\; \vp(0) = (0,\dots,0) $ and $\; \vp(\lm_0) = y^0 $. 
The Schwarz lemma for $\Gn$ says that, for some special case, when the target point $y^0$ belongs to a particular subset $\mathcal J_n$ of $\Gn$, the conditions which are necessary for the existence of the required interpolating function also become sufficient. Recall that when $n$ is odd,
\begin{align*}
\mathcal J_n = \Big\{ \lf y_1, \dots, y_{n-1}, q \rf \in \Gn : y_j = \dfrac{\nj}{n} y_1,\; y_{n-j} = \dfrac{\nj}{n}  y_{n-1}, \; 2 \leq j \leq \left[\frac{n}{2}\right] \Big\}
\end{align*}
and for $n$ even,
\begin{align*}
\mathcal J_n = \Big\{ \lf y_1, \dots, y_{n-1}, q \rf \in \Gn : y_{[\frac{n}{2} ]}= &{n \choose [\frac{n}{2} ]} \df{y_1 + y_{n-1}}{2n}\,,\;  y_j = \dfrac{\nj}{n} y_1 \,, \\ & y_{n-j} = \dfrac{\nj}{n}  y_{n-1},  \; 2 \leq j \leq \left[\frac{n}{2}\right] -1  \Big\}.
\end{align*}
Suppose $\lm_0 \in \D \setminus \{0\}$, $y^0=(y_1^0,\dots,y_{n-1}^0,q^0)\in \mathcal J_n$ such that $\n y_{n-1}^0 \n \leq \n y_1^0 \n$ and  $ \n \p(.,y^0) \n <|\lm_0|$. Then the Schwarz Lemma for $\Gn$ guarantees the existence of an analytic function
$\vp : \D \longrightarrow \mathcal J_n \subset \Gn $ such that  $\; \vp(0) = (0,\dots,0) $ and $\; \vp(\lm_0) = y^0 $ (see \cite{pal-roy 5}). \\

In Theorem $\ref{interpolating 1}$, assuming the existence of an analytic interpolating function $\vp: \D \longrightarrow \Gn$ in the Schwarz Lemma for $\Gn$, we explicitly describe the function $\vp$. The next theorem deals with the converse part when the target point $y^0 \in \mathcal J_n$. We show that if $\vp$ is of the form similar to that mentioned in Theorem $\ref{interpolating 1}$, then $\vp$ is an analytic interpolating function from unit disc to $\mathcal J_n \subset \Gn$ mapping the origin to the origin and $\lm_0$ to $y^0$.
The following lemma is just a particular case of Lemma \ref{prep interpolation 1} and can be proved similarly.

\begin{lemma}\label{prep interpolating}
	Let $\lm_0 \in \D \setminus \{0\}$ and $y^0=(y_1^0,\dots,y_{n-1}^0,q^0)\in \mathcal J_n$ and
	suppose that $y_1^0y_{n-1}^0\neq n^2q^0$, $|y_{n-1}^0|\leq |y_1^0|$ and $ \n \p(.,y^0) \n <|\lm_0|$. For any $\sg > 0$ let
	\begin{equation}{\label{Z(sg)}}
	Z_{\sg} = \begin{bmatrix}
	y_1^0/n\lm_0 & \sg w \\
	\\
	\sg^{-1}w & y_{n-1}^0/n
	\end{bmatrix}
	\end{equation}
	where $w^2 = \df{y_1^0y_{n-1}^0 - n^2q^0}{n^2\lm_0}$. Let $\mathcal{K}_{Z_{\sg}}$ be defined by the equation $\eqref{M-rho}$. Also let $\theta_1, \vartheta_1$ be the roots of the equation
	\[z + 1/z = \frac{|\lm_0|}{|y_1^0 y_{n-1}^0 - n^2 q^0|}\left( n^2 - \frac{|y_1^0|^2}{|\lm_0|^2} -
	|y_{n-1}^0|^2  + \frac{n^2 |q^0|^2}{|\lm_0|^2} \right).\] Then $\n Z_{\sg} \n < 1$ if and only if 
	\begin{equation}{\label{range-sigma}}
	\theta_1 < \sg^2 < \vartheta_1.
	\end{equation}
	Also $\mathcal{K}_{Z_{\sg}}(|\lm_0|)$ is not positive definite for any $\sg$ satisfying $\eqref{range-sigma}$.
\end{lemma}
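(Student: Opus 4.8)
The plan is to observe that Lemma \ref{prep interpolating} is precisely the $j = 1$ instance of Lemma \ref{prep interpolation 1}, the only differences being notational ($ {n \choose j} $ becomes $ {n \choose 1} = n$) and that the target point is now taken in the smaller set $\mathcal J_n$ instead of in $\Gn$. First I would make this specialization explicit: putting $j = 1$ in \eqref{Z-sigma-j} and writing $w$ for $w_1$, the matrix $Z_{\sg,1}$ is exactly the matrix $Z_\sg$ of \eqref{Z(sg)}; consequently $\mathcal{K}_{Z_\sg}$, defined by \eqref{M-rho}, coincides with $\mathcal{K}_{Z_{\sg,1}}$, and the numbers $\theta_1, \vartheta_1$ occurring in the two lemmas are the same, being the two roots of the equation $z + 1/z = X_{n-1}$ in the notation used inside the proof of Lemma \ref{prep interpolation 1}.

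Next I would check that the hypotheses carry over. Since $\mathcal J_n \subset \Gn$ by definition, the assumption $y^0 \in \mathcal J_n$ gives $y^0 \in \Gn$; and the three standing assumptions $y_1^0 y_{n-1}^0 \neq n^2 q^0$, $|y_{n-1}^0| \leq |y_1^0|$ and $\n \p(.,y^0) \n < |\lm_0|$ are verbatim the $j = 1$ cases of the hypotheses of Lemma \ref{prep interpolation 1}, recalling that $\p = \Phi_1$. Hence Lemma \ref{prep interpolation 1} applies with $j = 1$ and delivers at once that $\n Z_\sg \n < 1$ if and only if $\theta_1 < \sg^2 < \vartheta_1$ — that is, condition \eqref{range-sigma} — and that $\mathcal{K}_{Z_\sg}(|\lm_0|)$ is not positive definite for any $\sg$ in that range. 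This is exactly the conclusion of Lemma \ref{prep interpolating}.

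If instead one wants the self-contained argument that ``can be proved similarly'' alludes to, I would simply reproduce the computation in the proof of Lemma \ref{prep interpolation 1} line by line, replacing $ {n \choose j} $ by $n$ and the subscripts $j, n - j$ by $1, n - 1$ throughout: the formula \eqref{app-2} for $\det(1 - Z_\sg^* Z_\sg)$, the factorization \eqref{app-6} of $\det\big(\mathcal{K}_{Z_\sg}(|\lm_0|)\det(1 - Z_\sg^* Z_\sg)\big)$ into a product $-(l_\sg - k_1)(l_\sg - k_{n-1})$, and the elementary discussion of the graph of $x \mapsto x + 1/x$ all go through unchanged. The only substantive external input, as in the original proof, is the equivalence of conditions $(4)$ and $(6)$ of Theorem $2.5$ of \cite{pal-roy 4}, which yields $X_1 > 2$ and $X_{n-1} > 2$, and hence, together with $R_1 > 1$ and the analogue of \eqref{cond-K-Y}, the inequality $R_1 > \vartheta_1$; this is available because $y^0 \in \mathcal J_n \subset \Gn$. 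I do not anticipate a genuine obstacle: all the content lies in the already-established Lemma \ref{prep interpolation 1}, and passing from $\Gn$ to $\mathcal J_n$ only strengthens the hypotheses. The one point needing care is the bookkeeping of the substitution $ {n \choose j} \mapsto n$ and of which of $\theta_1, \vartheta_1$ is the relevant endpoint in \eqref{range-sigma}.
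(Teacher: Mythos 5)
Your proposal is correct and matches the paper exactly: the paper itself states that this lemma ``is just a particular case of Lemma \ref{prep interpolation 1} and can be proved similarly,'' which is precisely your reduction to the $j=1$ case (with $\p = \Phi_1$, ${n \choose 1} = n$, and $\mathcal J_n \subset \Gn$ supplying the hypotheses). Nothing further is needed.
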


For a $2 \times 2$ contractive matrix $B$, let $\widehat\pi_n$ be the following map
\begin{align*}
\widehat\pi_n (B) =  
\begin{cases}
\pi_{2[\frac{n}{2}]+1} \lf B,\dots, B \rf \q & \text{if } n \text{ is odd}\\
\pi_{2[\frac{n}{2}]} \lf B,\dots, B \rf \q & \text{if } n \text{ is even}.
\end{cases}
\end{align*}
Then, for $F \in S^{2 \times 2}$, it is clear that $\widehat\pi_n \circ F $ is a function from $\D$ to $\mathcal J_n$.

\begin{thm}\label{Interpolating Function}
	Let $\lm_0 \in \D \setminus \{0\}$ and $y^0=(y_1^0,\dots,y_{n-1}^0,q^0)\in \mathcal J_n$ and
	suppose that $y_1^0y_{n-1}^0\neq n^2q^0$, $|y_{n-1}^0|\leq |y_1^0|$ and $ \n \p(.,y^0) \n <|\lm_0|$. Let $\mathcal{R}$ be the set of analytic functions
	$\vp : \D \longrightarrow \mathcal J_n \subset \Gn $ satisfying  $\; \vp(0) = (0,\dots,0) $ and $\; \vp(\lm_0) = y^0 $.
	The function $\widehat\pi_n \circ F$ belongs to $\mathcal{R}$ where
	\begin{equation}{\label{F}}
	F(\lm)
	= \mathcal{M}_{-Z_{\sg}} \Big( (BQ)(\lm) \Big)
	\begin{bmatrix}
	\lm & 0\\
	0 & 1
	\end{bmatrix}
	\end{equation}
	where $Z_{\sg}$ is given by $\eqref{Z(sg)}$, $\sg$ satisfies $\eqref{range-sigma}$, $Q$ is a $2 \times 2$ Schur function such that
	\begin{equation}{\label{formulae-Q}}
	Q(0)^*\bar{\lm}_0 u_{Z_{\sg}}(\alpha) = v_{Z_{\sg}}(\alpha),
	\end{equation}
	where $ u_{Z_{\sg}}(\alpha), v_{Z_{\sg}}(\alpha)$ are given by equation $\eqref{u-v}$ and $\alpha \in \C^2 \setminus  \{0\}$ satisfies
	\begin{equation}{\label{condition-alpha}}
	\la \mathcal{K}_{Z_{\sg}}(|\lm_0|)\alpha,\alpha \ra \leq 0.
	\end{equation}
	Conversely, every function in $\mathcal{R}$ is of the form $\widehat\pi_n \circ F$ where $F$ and $Z_{\sg}$ are given by equation $\eqref{F}$ and $\eqref{Z(sg)}$, for some choice of $\sg,\alpha$ and $Q$
	satisfying the conditions $\eqref{range-sigma},\eqref{condition-alpha}$ and $\eqref{formulae-Q}$, respectively.
\end{thm}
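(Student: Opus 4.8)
The statement has two halves, and I would prove the forward (``synthesis'') implication and the converse separately; the converse is close to a specialisation of Theorem~\ref{interpolating 1}, so the bulk of the work is the forward one. For the forward direction, fix $\sg$ with $\theta_1<\sg^2<\vartheta_1$, a vector $\alpha\in\C^2\setminus\{0\}$ satisfying $\la\mathcal K_{Z_\sg}(|\lm_0|)\alpha,\alpha\ra\le0$ --- such $\alpha$ exists since, by Lemma~\ref{prep interpolating}, $\mathcal K_{Z_\sg}(|\lm_0|)$ is not positive definite --- and a Schur function $Q$ with $Q(0)^*\bar\lm_0 u_{Z_\sg}(\alpha)=v_{Z_\sg}(\alpha)$, which exists by Lemma~$3.2$ of \cite{awy}; define $F$ by \eqref{F}. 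I would first check that $F\in S_{2\times2}$: $BQ$ maps $\D$ into the \emph{open} unit ball of $\C^{2\times2}$ (it is the Blaschke-type perturbation of $Q$ at $\lm_0$, so $(BQ)(\lm_0)=0$ and $\|(BQ)(\lm)\|<1$ for $\lm\in\D$), hence so does $\M_{-Z_\sg}\circ BQ$ because $\M_{-Z_\sg}$ is an automorphism of the closed ball preserving its interior, and right multiplication by the contraction $\mathrm{diag}(\lm,1)$ keeps the norm strictly below $1$. By the observation just before the theorem, $\widehat\pi_n\circ F$ is then a well-defined analytic map $\D\to\mathcal J_n\subset\Gn$ --- the inclusion in $\Gn$ via Theorem~$2.5$ of \cite{pal-roy 4}, the inclusion in $\mathcal J_n$ by matching the explicit forms of $\pi_{2[\frac{n}{2}]},\pi_{2[\frac{n}{2}]+1}$ against the defining relations of $\mathcal J_n$.

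Next I would verify the two interpolation values. At $\lm=0$, right multiplication by $\mathrm{diag}(0,1)$ kills the first column of $F(0)$, so $[F(0)]_{11}=0$ and $\det F(0)=0$; moreover $(BQ)(0)=\lm_0 Q(0)$, so $[F(0)]_{22}=[\M_{-Z_\sg}(\lm_0 Q(0))]_{22}$, which vanishes by the criterion recalled after the definition of $u_Z,v_Z$ (applied with $X=\lm_0 Q(0)$) together with the normalisation $Q(0)^*\bar\lm_0 u_{Z_\sg}(\alpha)=v_{Z_\sg}(\alpha)$; hence $\widehat\pi_n(F(0))=(0,\dots,0)$. At $\lm=\lm_0$ we have $(BQ)(\lm_0)=0$, so $F(\lm_0)=\M_{-Z_\sg}(0)\,\mathrm{diag}(\lm_0,1)$; since $\M_{-Z_\sg}=\M_{Z_\sg}^{-1}$ and $\M_{Z_\sg}(Z_\sg)=0$ we get $\M_{-Z_\sg}(0)=Z_\sg$, so $F(\lm_0)=Z_\sg\,\mathrm{diag}(\lm_0,1)$; reading off the entries of $Z_\sg$ and using $w^2=(y_1^0 y_{n-1}^0-n^2 q^0)/(n^2\lm_0)$ gives $[F(\lm_0)]_{11}=y_1^0/n$, $[F(\lm_0)]_{22}=y_{n-1}^0/n$ and $\det F(\lm_0)=q^0$, whence $\widehat\pi_n(F(\lm_0))=y^0$ because $y^0\in\mathcal J_n$. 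This finishes the forward direction.

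For the converse, let $\vp\in\mathcal R$. Since $\vp(\lm)\in\mathcal J_n$ for every $\lm$, the coordinate functions satisfy $\vp_j=\tfrac{\nj}{n}\vp_1$ and $\vp_{n-j}=\tfrac{\nj}{n}\vp_{n-1}$ for $2\le j\le[\frac{n}{2}]$ (and $\vp_{[\frac{n}{2}]}={n\choose[\frac{n}{2}]}\tfrac{\vp_1+\vp_{n-1}}{2n}$ when $n$ is even) \emph{identically} in $\lm$, so $\vp$ is recovered from $(\vp_1,\vp_{n-1},\vp_n)$. Put $h=\vp_1\vp_{n-1}/n^2-\vp_n$, a Schur function by condition~$(4)$ of Theorem~$2.5$ of \cite{pal-roy 4}; take an inner--outer factorisation $h=fg$ with $|\tilde f|=|\tilde g|$ a.e.\ on $\T$ and $g(0)=0$ (possible since $h(0)=0$), and set $F=\begin{bmatrix}\vp_1/n&f\\ g&\vp_{n-1}/n\end{bmatrix}$, so that $\widehat\pi_n\circ F=\vp$ and $\det F=\vp_n$. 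From here the argument is verbatim that of Theorem~\ref{interpolating 1}: conditions $(3),(3'),(5)$ of Theorem~$2.5$ of \cite{pal-roy 4} give $\|\tilde F\|\le1$ a.e.\ on $\T$, $F$ is non-constant because $y_1^0 y_{n-1}^0\ne n^2 q^0$, hence $F\in S_{2\times2}$; then $\sg:=f(\lm_0)/w$ (made positive by a diagonal-unitary conjugation, which does not alter $\widehat\pi_n\circ F$) turns $G(\lm):=F(\lm)\,\mathrm{diag}(\lm^{-1},1)$ into an element of $S_{2\times2}$ with $[G(0)]_{22}=0$ and $G(\lm_0)=Z_\sg$, so $\|Z_\sg\|<1$ and \eqref{range-sigma} holds by Lemma~\ref{prep interpolating}; finally Lemma~$3.2$ of \cite{awy} produces $\alpha$ and $Q$ obeying \eqref{condition-alpha} and \eqref{formulae-Q} with $G=\M_{-Z_\sg}\circ BQ$, and multiplying back by $\mathrm{diag}(\lm,1)$ exhibits $F$ in the form \eqref{F}.

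I expect the main obstacle to be the bookkeeping in the forward direction: one must combine the two objects imported from \cite{awy} --- the matricial M\"obius transformation (through $\M_{-Z_\sg}=\M_{Z_\sg}^{-1}$, $\M_{-Z_\sg}(0)=Z_\sg$, and the $u$--$v$ criterion for $[\M_{-Z_\sg}(\,\cdot\,)]_{22}=0$) and the Blaschke-type map $B$ (through $(BQ)(\lm_0)=0$ and $(BQ)(0)=\lm_0 Q(0)$) --- with the normalisation \eqref{formulae-Q} so as to pin down $F(0)$ and $F(\lm_0)$ exactly. Once $[F(0)]_{11}=[F(0)]_{22}=\det F(0)=0$ and $F(\lm_0)=Z_\sg\,\mathrm{diag}(\lm_0,1)$ are in hand, passing through $\pi_{2k},\pi_{2k+1}$ and the definition of $\mathcal J_n$ is routine. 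In the converse the only delicate point is that the relations cutting out $\mathcal J_n$ hold for every $\lm$, not merely at $\lm_0$, which is precisely what allows a single $2\times2$ function to reproduce $\vp$ via $\widehat\pi_n$.
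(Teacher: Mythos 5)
Your proposal is correct and follows essentially the same route as the paper: Lemma~\ref{prep interpolating} plus part $(2)$ of Lemma $3.2$ of \cite{awy} for the forward direction, and the inner--outer factorisation of $\vp_1\vp_{n-1}/n^2-\vp_n$ followed by the reduction to Theorem~\ref{interpolating 1} for the converse. The only cosmetic difference is that you unpack the content of Lemma $3.2$ of \cite{awy} (the $u$--$v$ criterion at $\lm=0$ and $\M_{-Z_\sg}(0)=Z_\sg$ at $\lm=\lm_0$) to verify $F(0)$ and $F(\lm_0)$ directly, where the paper simply cites that lemma to produce $G$ with $[G(0)]_{22}=0$ and $G(\lm_0)=Z_\sg$.
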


\begin{proof}
	Suppose $\sg$ satisfies $\eqref{range-sigma}$, that is, $\theta_1 < \sg^2 < \vartheta_1$, where $\theta_1, \vartheta_1$ are the roots of the equation
	\[z + 1/z = \frac{|\lm_0|}{|y_1^0 y_{n-1}^0 - n^2 q^0|}\left( n^2 - \frac{|y_1^0|^2}{|\lm_0|^2} -
	|y_{n-1}^0|^2  + \frac{n^2 |q^0|^2}{|\lm_0|^2} \right) .\]
	Then by Lemma \ref{prep interpolating}, $\n Z_{\sg} \n < 1$ and $ \mathcal{K}_{Z_{\sg}}(|\lm_0|)$ is not positive definite. Further, suppose $\al \in \C^2 \setminus \ls (0,0) \rs$  such that
	$\la \mathcal{K}_{Z_{\sg}}(|\lm_0|)\al,\al \ra \leq 0$ and also suppose $Q$ is a Schur function such that $Q(0)^*\bar{\lm}_0u_{Z_{\sg}}(\al) = v_{Z_{\sg}}(\al)$.
	%   $\sg, \al$ and $Q$ satisfy conditions $\eqref{range-sigma}, \eqref{condition-alpha}$ and $\eqref{formulae-Q}$ respectively,
	Then using part $(2)$ of Lemma $3.2$ of \cite{awy}, there exists a function $G \in S_{2 \times 2}$ such that
	\[ G = \mathcal{M}_{-Z_{\sg}}\circ (BQ)  \qq \text{and} \qq [G(0)]_{22}=0,\;
	G(\lm_0)=Z_{\sg}.\]
	Thus the function $F$, given by equation $\eqref{F}$, is
	\[
	F(\lm) = G(\lm)\begin{bmatrix}
	\lm & 0 \\
	0 & 1
	\end{bmatrix} \q \text{for } \lm \in \D.
	\]
	It is clear that $F \in S_{2 \times 2}$ and $F$ satisfies the following
	$$
	F(0) = \begin{bmatrix}
	0 & * \\
	0 & 0 \\
	\end{bmatrix},
    \q
	F(\lm_0) = \begin{bmatrix}
	y_1^0 /n & \sg w \\
	\lm_0 \sg^{-1}w & y_{n-1}^0 /n \\
	\end{bmatrix}.
	$$
	Thus the function $\vp = \widehat\pi_n \circ F$ is analytic from $\D$ to $\mathcal J_n$ such that $\vp(0) = (0,\dots,0)$ and $\vp(\lm_0) = (y_1^0,\dots,y_{n-1}^0,q^0)=y^0$. Hence $\vp \in \mathcal{R}$.\\
	
	   The proof of the converse part is similar to the proof of Theorem \ref{interpolating 1}.
	   Suppose $\vp \in \mathcal{R}$. Denote the radial limit function
	   of $\vp$ by $\tilde{\vp}$ which, by Fatou's Lemma, maps $\mathbb{T}$ almost everywhere to $\Gamn$.
	   Consider the bounded analytic function
	   \[h(\lm) =\df{\vp_1(\lm)\vp_{n-1}(\lm)}{n^2} - \vp_n(\lm) \q \text{for } \lm \in \D ,\]
%	   Then $h$ is a bounded analytic function on $\D$ (eventually a Schur function by condition $(4)$ of Theorem \ref{char G 3}).
	   and write the inner-outer factorization as
	   $
	   h(\lm) = f(\lm)g(\lm) \q \text{for } \lm \in \D,
	   $
	   where $f,g \in H^{\infty}$ such that $|\tilde{f}|=|\tilde{g}|$ almost everywhere on $\mathbb{T}$. 
%	   As $fg(0)=0$, without loss of generality assume $g(0)=0$. 
	   Consider
	   $F(\lm) = \begin{bmatrix}
	   \vp_1(\lm)/n & f(\lm) \\
	   \\
	   g(\lm) & \vp_{n-1}(\lm)/n
	   \end{bmatrix}$, $\lm \in \D$.
	   Clearly $\widehat\pi_n \circ F = \vp$. Using similar method, as shown in proof of Theorem \ref{interpolating 1}, we can show that $ F \in S_{2 \times 2}$.

	   Again using the method similar to that in proof of Theorem \ref{interpolating 1}, we show that $F$ is of the desired form.
%	   Since $y_1^0y_{n-1}^0 \neq n^2q^0$, clearly $f(\lm_0)\neq 0$.
	   Consider $\sg = \df{f(\lm_0)}{w}$, then $g(\lm_0)= \df{\lm_0w}{\sg}$.
	   We may assume that $\sg > 0$. 
	   Also consider
	   $ G(\lm)= F(\lm)\begin{bmatrix}
	   \lm^{-1} & 0 \\
	   0 & 1
	   \end{bmatrix}, \;  \lm \in \D .$
	   We may assume that $\sg > 0$.
	   Then we have $G \in S_{2 \times 2}$, $[G(0)]_{22}= 0$ and $G(\lm_0) =Z_{\sg}$.
	   Then $\n Z_{\sg} \n = \n G(\lm_0)\n <1 $ and hence $\theta_1 < \sg^2 < \vartheta_1$.  By part-$(2)$ of Lemma $3.2$ of \cite{awy}, there exists some $\al \in \C^2 \setminus  \{0\}$ such that $\la \mathcal{K}_{Z_{\sg}}(|\lm_0|)\al,\al \ra \leq 0$ and a Schur function $Q$ such that $Q(0)^*\bar{\lm}_0u_{Z_{\sg}}(\al) = v_{Z_{\sg}}(\al)$ and
	   $G = \mathcal{M}_{-Z_{\sg}} \circ BQ,$
	   where $u_{Z_{\sg}}(\al), v_{Z_{\sg}}(\al)$ are given by equation $\eqref{u-v}$. Thus the conditions $\eqref{range-sigma}$, $\eqref{condition-alpha}$ and $\eqref{formulae-Q}$ are satisfied and $F$ can be written as
	   $F(\lm)= \mathcal{M}_{-Z_{\sg}} \lf BQ(\lm) \rf
	   \begin{bmatrix}
	   \lm & 0 \\
	   0 & 1
	   \end{bmatrix}$.
%	   that is, $F$ can be written in the form $\eqref{F}$. Hence the proof is complete.
\end{proof}

%We stated and proved Theorem $\ref{Interpolating Function}$ in view of the fact that the conditions described in Schwarz lemma for $\Gn$ are sufficient for the existence of a corresponding interpolating function whenever the non-zero target point $y^0 \in \mathcal J_n \subset \Gn$ and therefore, it is true for $n \geq 3$. 

Theorem $\ref{Interpolating Function}$ is true for $n \geq 3$, because of the fact that $\mathcal J_n$ is defined only for $n \geq 3$. We now present a result similar to Theorem $\ref{Interpolating Function}$ for the case $n=2$.
Note that the conditions described in Schwarz lemma for $\Gn$ in \cite{pal-roy 5} are also sufficient for the existence of an analytic interpolating function from the unit disc to the symmetrized bidisc $\mathbb G_2$ (also see \cite{AY-BLMS} and \cite{pal-roy 1}). The next theorem describes and also provide example of the interpolating function related to Schwarz lemma for $\mathbb G_2$ for the condition that is mutually exclusive to that in Theorem $1.4$ of \cite{AY-BLMS}. 

\begin{thm}\label{Interpolating Function 2}
	Let $\lm_0 \in \D \setminus \{0\}$ and $y^0= (s_0,p_0) \in \mathbb G_2$ be such that ${s_0}^2\neq 4 p_0$ and 
	\[ \dfrac{2|s_0 - \bar s_0 p_0| + |{s_0}^2 - 4p_0|}{4 - |s_0|^2} <|\lm_0|.\]
%	$ \n \p(.,y^0) \n <|\lm_0|$. 
    Let  
    $\theta, \vartheta$ be the roots of the equation
    \[z + 1/z = \frac{ 4|\lm_0|^2 - |s_0|^2 -
    	|s_0|^2 |\lm_0|^2  + 4 |p_0|^2}{|\lm_0||{s_0}^2 - 4p_0|}\] and for $\sg>0$
	\begin{equation}\label{Z sg 3}
	Z_{\sg} = \begin{bmatrix}
	s_0/2\lm_0 & \sg w \\
	\\
	\sg^{-1}w & s_0/2
	\end{bmatrix}
	\end{equation}
	where $w^2 = \df{{s_0}^2 - 4p_0}{4\lm_0}$.
	Suppose a function $\vp$ is given by 
	\[\vp(\lm) = \Big( [F(\lm)]_{11} + [F(\lm)]_{22} , \det F(\lm) \Big) \q \text{for } \lm \in \D,\] 
	where $F$ is given by equation $\eqref{F}$ with $Z_{\sg}$ as in equation \eqref{Z sg 3}, for some $\sg$ satisfying $\theta < \sg^2 < \vartheta $
	and $\alpha$ and $Q$ satisfying the conditions $\eqref{condition-alpha}$ and $\eqref{formulae-Q}$, respectively. Then $\vp$ is an analytic interpolating function from the unit disc to $\mathbb G_2$ satisfying  $\; \vp(0) = (0,0) $ and $\; \vp(\lm_0) = y^0 $.
	 
	Conversely, any analytic functions
	$\vp : \D \longrightarrow  \mathbb G_2$ satisfying  $\; \vp(0) = (0,0) $ and $\; \vp(\lm_0) = y^0 $ is of the form $\vp(\lm) = \left( [F(\lm)]_{11} + [F(\lm)]_{22} , \det F(\lm) \right)$ where $F$, $Z_{\sg}$ are given by $\eqref{F}$, \eqref{Z sg 3}, $\sg$ satisfies $\theta < \sg^2 < \vartheta $
	and $\alpha$, $Q$ are as in the conditions $\eqref{condition-alpha}$, $\eqref{formulae-Q}$, respectively.	 
\end{thm}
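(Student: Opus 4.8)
The plan is to treat Theorem~\ref{Interpolating Function 2} as the $n=2$ analogue of Theorem~\ref{Interpolating Function}, exploiting that $\widetilde{\mathbb G}_2 = \mathbb G_2$ and that the symmetrization map in the $n=2$ case is exactly $B \mapsto \big([B]_{11}+[B]_{22}, \det B\big)$, i.e.\ $\widehat\pi_2 = \pi_2$. Thus the statement factors through exactly the same machinery: a $2\times 2$ Schur-class interpolation problem whose data is encoded by the matrix $Z_{\sg}$ of \eqref{Z sg 3} together with the Kre\u{\i}n-space form $\mathcal{K}_{Z_{\sg}}(|\lm_0|)$.

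\emph{Forward direction.} Given $\sg$ with $\theta < \sg^2 < \vartheta$, first invoke Lemma~\ref{prep interpolating} (in its obvious $n=2$ form, with $n=2$ replacing $n$ and noting ${n\choose 1} = 2$; the quantity $w^2 = ({s_0}^2-4p_0)/4\lm_0$ matches $w^2 = (y_1^0 y_{n-1}^0 - n^2 q^0)/n^2\lm_0$) to conclude $\n Z_{\sg}\n<1$ and that $\mathcal{K}_{Z_{\sg}}(|\lm_0|)$ is not positive definite, so that an $\al\in\C^2\setminus\{0\}$ with $\la\mathcal{K}_{Z_{\sg}}(|\lm_0|)\al,\al\ra\le 0$ exists and \eqref{condition-alpha} is satisfiable. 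Then, exactly as in the forward half of the proof of Theorem~\ref{Interpolating Function}, apply part~$(2)$ of Lemma~$3.2$ of~\cite{awy} to the Schur function $Q$ satisfying \eqref{formulae-Q}: this yields $G = \mathcal{M}_{-Z_{\sg}}\circ(BQ)\in S_{2\times 2}$ with $[G(0)]_{22}=0$ and $G(\lm_0)=Z_{\sg}$. Setting $F(\lm)=G(\lm)\operatorname{diag}(\lm,1)$ as in \eqref{F}, one reads off $F(0) = \big(\begin{smallmatrix}0 & *\\ 0 & 0\end{smallmatrix}\big)$ and $F(\lm_0)=\big(\begin{smallmatrix} s_0/2 & \sg w\\ \lm_0\sg^{-1}w & s_0/2\end{smallmatrix}\big)$. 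Finally, compose with $\pi_2$: $\vp = \pi_2\circ F$ is analytic $\D\to\mathbb G_2$ (since $F\in S_{2\times 2}$ forces $\n F(\lm)\n<1$, hence $F(\lm)$ has spectral radius $<1$, hence $\pi_2(F(\lm))\in\mathbb G_2$), with $\vp(0)=(0+0,\det F(0))=(0,0)$ and $\vp(\lm_0)=\big(\tfrac{s_0}{2}+\tfrac{s_0}{2},\ \det F(\lm_0)\big)=\big(s_0,\ \tfrac{{s_0}^2}{4}-w^2\lm_0\big)=(s_0,p_0)$ by the choice of $w$.

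\emph{Converse direction.} This mirrors the converse half of Theorem~\ref{Interpolating Function} verbatim. Given $\vp=(\vp_1,\vp_2):\D\to\mathbb G_2$ analytic with $\vp(0)=(0,0)$, $\vp(\lm_0)=(s_0,p_0)$, take radial limits $\tilde\vp$ (Fatou), set $h(\lm)=\tfrac{\vp_1(\lm)^2}{4}-\vp_2(\lm)$ — wait, more precisely $h = \tfrac{\vp_1 \vp_{n-1}}{n^2} - \vp_n$ with $n=2$ collapses the two diagonal entries, so one instead writes $h(\lm) = \big(\tfrac{\vp_1(\lm)}{2}\big)^2 - \vp_2(\lm)$, inner-outer factor $h=fg$ with $|\tilde f| = |\tilde g|$ a.e.\ on $\T$ (using that $(s,p)\in\Gamma_2$ iff $|s-\bar s p|\le 4-|s|^2$ and the analogous a.e.\ boundary identities, i.e.\ the $n=2$ instances of Theorem~$2.5$ of \cite{pal-roy 4}), normalize $g(0)=0$ using $h(0)=0$, and set $F=\big(\begin{smallmatrix}\vp_1/2 & f\\ g & \vp_1/2\end{smallmatrix}\big)$. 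The a.e.\ bound $\n\tilde F\n\le 1$ on $\T$ follows from the boundary characterization exactly as before, and $F$ non-constant (since ${s_0}^2\neq 4p_0$ forces $fg(\lm_0)\neq 0$) gives $F\in S_{2\times 2}$ by maximum modulus. Then $\sg := f(\lm_0)/w$ (WLOG $\sg>0$), $G(\lm):=F(\lm)\operatorname{diag}(\lm^{-1},1)$ has $[G(0)]_{22}=0$ and $G(\lm_0)=Z_{\sg}$, so $\n Z_{\sg}\n<1$ forces $\theta<\sg^2<\vartheta$ by Lemma~\ref{prep interpolating}, and part~$(2)$ of Lemma~$3.2$ of~\cite{awy} produces the required $\al$ (satisfying \eqref{condition-alpha}) and $Q$ (satisfying \eqref{formulae-Q}) with $G=\mathcal{M}_{-Z_{\sg}}\circ BQ$. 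Hence $\vp = \pi_2\circ F$ with $F$ of the form \eqref{F}.

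\emph{Main obstacle.} The only genuinely nontrivial point is verifying the a.e.\ boundary identity $|\tilde f| = |\tilde g|$ from the inner–outer factorization of $h$, i.e.\ that $\n\tilde F\n\le 1$ a.e.\ on $\T$: this is where one needs the precise boundary characterization of $\Gamma_2$ (the $n=2$ case of conditions $(3),(3'),(5)$ of Theorem~$2.5$ of~\cite{pal-roy 4}) to match the two diagonal entries of $1-\tilde F^*\tilde F$ against the condition $|\tilde\vp_1^2/4 - \tilde\vp_2| = $ (both diagonal quantities), which in the $n=2$ case is slightly degenerate because the two off-diagonal symmetrization slots of $\pi_n$ have merged. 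Everything else is a transcription of the proof of Theorem~\ref{Interpolating Function} with $n=2$; I would state explicitly that the hypothesis $\tfrac{2|s_0-\bar s_0 p_0| + |{s_0}^2-4p_0|}{4-|s_0|^2} < |\lm_0|$ is precisely $\n\Phi_1(\cdot,y^0)\n_{H^\infty} < |\lm_0|$ in the $n=2$ normalization, so Lemma~\ref{prep interpolating} applies.
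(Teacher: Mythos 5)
Your proposal is correct and follows essentially the same route as the paper: the paper's own proof simply observes that $\widetilde{\mathbb G}_2=\mathbb G_2$, that $\widehat\pi_2\circ F=\big([F]_{11}+[F]_{22},\det F\big)$, that the hypothesis is exactly $\n\Phi_1(\cdot,y^0)\n_{H^\infty}<|\lm_0|$, and then invokes the proof of Theorem~\ref{Interpolating Function} with the substitution $y_1^0=y_{n-1}^0=s_0$, $q^0=p_0$. You in fact spell out more of the details (the $n=2$ degeneration of $h$, the boundary estimate, and the verification $\vp(\lm_0)=(s_0,p_0)$) than the paper does.
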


\begin{proof}
	First note that $\widetilde{\mathbb G}_2 = \mathbb G_2$ and $\widehat{\pi}_2 \circ F = \left( [F(\lm)]_{11} + [F(\lm)]_{22} , \det F(\lm) \right)$. For any $F \in S_{2 \times 2}$ the function $\widehat\pi_2 \circ F$ is analytic from $\D$ to $\mathbb G_2$. It is clear, by the definition of $\p$ for the case $n=2,$ that 
	\[ \n \p(.,y^0) \n = \dfrac{2|s_0 - \bar s_0 p_0| + |{s_0}^2 - 4p_0|}{4 - |s_0|^2}  <|\lm_0|.\]
	Then the Proof of both the parts are exactly similar to that of Theorem $\ref{Interpolating Function}$ upon substituting $q^0 = p_0$, $y_1^0 = s_0$ and $y_{n-1}^0=s_0$.
	
\end{proof}

\section{Relation with the $\mu$-synthesis problem}
%The symmetrized bidisc and its higher dimensional analogue symmetrized polydisc has a root in the theory of robust control. 
The $\mu$-synthesis problem plays an important role in robust control theory of control engineering.
Here $\mu$ is used to denote the structured singular value of a matrix relative
to a space of linear transformations. For a given linear subspace $E$ of $\C^{n \times m}$ the {\em structured singular value} of an $m \times n$ matrix $B$, denoted by $\mu_E(B)$, is defined as
\[
\mu_E(B) = \dfrac{1}{\left( \inf \{ \n X \n : X \in E, I-BX  \mbox{ is singular} \} \right)}.
\]
Here $\n X\n$ denotes the operator norm of the matrix $X$. In the event of $I-BX$ is non-singular for all $X \in E$, we define $\mu_E(B)=0$. The underlying linear subspace $E$ is considered as a \textquotedblleft structure \textquotedblright.

Given a linear subspace $E\subset \C^{n \times m}$, distinct points $\lm_1, \dots, \lm_k$ in $\D$ and same number of $m \times n$ target matrices $B_1, \dots, B_k$, the $\mu$-synthesis problem aims to find an analytic matrix-valued function $f : \D \longrightarrow \C^{m \times n}$ such that $f(\lm_j)=B_j $ for $1 \leq j \leq k$ and
$\mu_E(f(\lambda))<1$, for all $\lambda \in \mathbb D$. 

On the other hand, for given distinct points $\lm_1, \dots, \lm_k$ in $\D$ and $n \times n$ matrices $W_1,\dots, W_k$, the spectral Nevanlinna-Pick interpolation problem is to find necessary and sufficient conditions for the existence of an analytic matrix-valued function $F : \D \longrightarrow \C^{n \times n}$ such that $F(\lm_j)=W_j $ for $1 \leq j \leq k$ and
$r(F(\lambda))<1$, for all $\lambda \in \mathbb D$ ($r(F(\lambda))$ denotes the spectral radius of $F(\lambda)$). In \cite{costara1}, it was shown that such interpolation problem into the spectral unit ball of $\C^{n \times n}$ is equivalent to the interpolation problem into the symmetrized polydisc $\gn$.

Note that $\mu_E (B)= \|B\|$, when $E=\C^{n \times m}$. For the case of $m=n$ and $E = \ls \lambda I_{n} : \lambda \in \C \rs$, $I_n$ denotes the $n \times n$ identity matrix, $\mu_E(B)$ is equal to the spectral radius $r(B)$ and consequently in this case the $\mu$-synthesis problem reduced to the spectral Nevenlina-Pick interpolation  problem. For more details about the $\mu$-synthesis problem and structured singular value a reader is referred to \cite{BFT90,doyle}.

In this section we describe the relation of the domain $\Gn$ with the $\mu$-synthesis problem. In Theorem \ref{mu-syn1}, we show that the points of $\Gn$ can also be characterized using the structured singular value. In Remark \ref{mu interpolation} we show the connection between the $\mu$-synthesis problem with the interpolation problem in $\textrm{Hol}(\D, \Gn)$.  

Consider the linear subspace $E = \ls \begin{pmatrix}
z & 0\\
0 & w
\end{pmatrix} : z,w \in \C \rs$. 
%Note that the operator norm of a diagonal matrix is the maximum absolute value of the diagonal entries. 
For a $2 \times 2$ matrix $B=[b_{ij}]$ and $X = \begin{pmatrix}
z & 0\\
0 & w
\end{pmatrix}$, we have $\n X \n = \max\{ |z|, |w| \}$ and 
\[
\det(I - BX) = 1- z b_{11} - w b_{22} - zw \det B .
\]
Then for $r>0$, 
\begin{align*}
&\mu_E(B) \leq \frac{1}{r} \\
\Leftrightarrow & \max\{ |z|, |w| \} \geq r \; \text{ whenever }\; 1- b_{11}z - b_{22}w - \det B zw =0 \\
\Leftrightarrow &  1- z b_{11} - w b_{22} - zw \det B  \neq 0 \; \text{ for any  } z,w \in r\D.
\end{align*}

\begin{thm} \label{mu-syn1}
$y \in \Gn$ if and only if there exist $\lt \frac{n}{2} \rt$ number of $2 \times 2$ matrices $B_1, \dots, B_{[\frac{n}{2}]}$ such that $\det B_1=\dots=B_{[\frac{n}{2}]}$,  $y= \pi_n \lf B_1, \dots, B_{[\frac{n}{2}]} \rf $ and $\mu_{E}(B_j) <1$ for $j = 1, \dots, [\frac{n}{2}]$.
\end{thm}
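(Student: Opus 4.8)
The plan is to recast the condition ``$\mu_E(B_j)<1$'' as an $H^\infty$-bound on the fractional linear transformations $\Phi_j$, and then to read off $y\in\Gn$ from the characterization of $\Gn$ in Theorem~$2.5$ of \cite{pal-roy 4}. The first step is to unpack $\mu_E(B)<1$. For $X=\mathrm{diag}(z,w)$ the polynomial $\det(I-BX)$ is affine in $z$ for each fixed $w$, and affine in $w$ for each fixed $z$; requiring it to be non-vanishing on $\overline{\D}\times\overline{\D}$ and eliminating $z$ (respectively $w$) shows that
\[
\mu_E(B)<1 \iff \left\| \frac{[B]_{11}-w\det B}{1-w[B]_{22}} \right\|_{H^\infty(\D)}<1 \iff \left\| \frac{[B]_{22}-w\det B}{1-w[B]_{11}} \right\|_{H^\infty(\D)}<1 .
\]
(The pole of each of these functions lies outside $\overline{\D}$ once $\mu_E(B)<1$, since that forces $|[B]_{11}|,|[B]_{22}|<1$; also $\mu_E(B)\ge\mu_{\{\lambda I\}}(B)=r(B)$, so $|\det B|<1$ as well.)

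Now suppose $y=\pi_n(B_1,\dots,B_{[n/2]})$ with $\det B_1=\cdots=\det B_{[n/2]}=q$. The definitions of $\pi_{2k}$ and $\pi_{2k+1}$ give $\binom{n}{j}[B_j]_{11}=y_j$ and $\binom{n}{j}[B_j]_{22}=y_{n-j}$ for $1\le j<n/2$, together with $\det B_j=q$ and last coordinate $q$ of $y$ (when $n$ is even, only $[B_{n/2}]_{11}+[B_{n/2}]_{22}=2y_{n/2}/\binom{n}{n/2}$ is prescribed). Substituting these relations into the two functions above and comparing with the definition of $\Phi_j$, one obtains, for $1\le j<n/2$,
\[
\mu_E(B_j)<1 \iff \|\Phi_j(\cdot,y)\|_{H^\infty}<1 \iff \|\Phi_{n-j}(\cdot,y)\|_{H^\infty}<1 .
\]
The essential observation is that the single matrix $B_j$ controls \emph{both} $\Phi_j$ and $\Phi_{n-j}$, and as $j$ ranges over $1,\dots,[n/2]$ the pairs $\{j,n-j\}$ exhaust $\{1,\dots,n-1\}$. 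Hence ``$\mu_E(B_j)<1$ for every $j$'' is equivalent to ``$\|\Phi_i(\cdot,y)\|_{H^\infty}<1$ for $i=1,\dots,n-1$'', which, together with $|q|<1$, is exactly the characterization of $y\in\Gn$ in Theorem~$2.5$ of \cite{pal-roy 4}; this settles both implications when $n$ is odd.

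For the direction $y\in\Gn\Rightarrow(\exists B_j)$ one still has to exhibit the matrices: for $1\le j\le[n/2]$ put
\[
B_j=\begin{bmatrix} y_j/\binom{n}{j} & \sigma_j w_j\\[4pt] w_j/\sigma_j & y_{n-j}/\binom{n}{j}\end{bmatrix},\qquad
w_j^2=\frac{y_jy_{n-j}-\binom{n}{j}^{2}q}{\binom{n}{j}^{2}},
\]
with any $\sigma_j>0$ (off-diagonal entries $0$ if $w_j=0$); then $\det B_j=q$ for all $j$, $\pi_n(B_1,\dots,B_{[n/2]})=y$, and $\mu_E(B_j)<1$ by the equivalence above, the bound $\|\Phi_j(\cdot,y)\|_{H^\infty}<1$ being one of the equivalent conditions of Theorem~$2.5$ of \cite{pal-roy 4}. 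When $n$ is even one uses the symmetric choice $[B_{n/2}]_{11}=[B_{n/2}]_{22}=y_{n/2}/\binom{n}{n/2}$; closing both directions then requires the extra observation that $\mu_E(B_{n/2})<1$ holds for \emph{some} admissible $B_{n/2}$ precisely when it holds for this symmetric one --- equivalently when $\|\Phi_{n/2}(\cdot,y)\|_{H^\infty}<1$, equivalently when the defining middle inequality $2|y_{n/2}-\bar y_{n/2}q|<\binom{n}{n/2}(1-|q|^{2})$ of $\Gn$ holds. Indeed, writing the middle diagonal as $\bigl(t,\,2y_{n/2}/\binom{n}{n/2}-t\bigr)$, the set of admissible $t$ with $\mu_E<1$ is an intersection over $w\in\overline{\D}$ of discs in the variable $t$ (the inequality $|t-wq|^{2}<|1-w(2y_{n/2}/\binom{n}{n/2}-t)|^{2}$ has the coefficient $1-|w|^{2}\ge0$ on $|t|^{2}$), hence convex, and it is symmetric about $t=y_{n/2}/\binom{n}{n/2}$ by conjugating $B_{n/2}$ with the coordinate-swap permutation matrix; a non-empty convex set symmetric about a point contains that point.

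The only genuine obstacle I foresee is this even-middle-index bookkeeping, together with isolating among the several equivalent conditions of Theorem~$2.5$ of \cite{pal-roy 4} the precise statement ``$y\in\Gn\iff |q|<1$ and $\|\Phi_i(\cdot,y)\|_{H^\infty}<1$ for all $i$'' and checking that the substitution of the previous paragraph lands exactly on it. Everything else is the elementary affine elimination of the first step and a routine computation.
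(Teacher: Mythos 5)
Your proposal is correct and, at bottom, follows the same route as the paper: both directions reduce to the characterization of $\Gn$ in Theorem $2.5$ of \cite{pal-roy 4} via the non-vanishing of $\det(I-B_jX)=1-z[B_j]_{11}-w[B_j]_{22}+zw\det B_j$ on the closed bidisc, and the forward direction uses the same symmetric matrix $B_j$ with $w_j^2=\bigl(y_jy_{n-j}-\nj^2q\bigr)/\nj^2$. Your first step --- eliminating $z$ to turn $\mu_E(B_j)<1$ into $\|\Phi_j(\cdot,y)\|_{H^\infty}<1$ --- is just a repackaging of that same polynomial condition, since $\Phi_j(w,y)=\bigl([B_j]_{11}-qw\bigr)/\bigl(1-[B_j]_{22}w\bigr)$, so nothing genuinely different happens there. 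Where you do go beyond the paper is the even-$n$ middle index: the paper's converse simply asserts that $\pi_n(B_1,\dots,B_{[n/2]})=y$ forces $[B_j]_{11}=y_j/\nj$ and $[B_j]_{22}=y_{n-j}/\nj$, which is false for $j=n/2$ because $\pi_{2k}$ only prescribes the \emph{trace} of $B_{n/2}$; your convexity-plus-symmetry argument (the admissible set of $t$ is an intersection of discs, symmetric about $y_{n/2}/\binom{n}{n/2}$ under conjugation by the swap matrix, hence contains its center) closes exactly this gap and is a worthwhile addition. The only point to pin down is the one you flag yourself: that Theorem $2.5$ of \cite{pal-roy 4} does contain the condition ``$\|\Phi_i(\cdot,y)\|_{H^\infty}<1$ for all $i$'' (equivalently the bidisc non-vanishing for each $j\le[n/2]$) as a characterization of $\Gn$; granting that, the proof is complete.
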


\begin{proof}
	Suppose $y \in \Gn$. Then for all $j=1, \dots = [\frac{n}{2}]$, by Theorem $2.5$ of \cite{pal-roy 4}, we have
	\begin{align*}
%	&{n \choose j} - y_j z - y_{n-j}w + { n \choose j} qzw \neq 0,\; \text{ for all } z,w \in \overline\D, \\
%	i.e., \q & 
	1 - \frac{y_j}{{n \choose j}} z - \frac{y_{n-j}}{{n \choose j}} w +  qzw \neq 0 \text{ whenever } z,w \in r\D \text{ for some } r>1.
	\end{align*}
	For each $1 \leq j \leq [\frac{n}{2}]$, consider the matrix 
	$$
	\begin{bmatrix}
	 y_j/{n \choose j} & w_j \\
	 \\
	w_j &  y_{n-j}/{n \choose j}
	\end{bmatrix} = B_j \;\text{ (say)},
	$$
	where ${w_j}^2 = \frac{y_j y_{n-j} - \nj^2 q}{{n \choose j}^2}$. Then $\mu_E(B_j) \leq \dfrac{1}{r}<1$. Clearly $\pi_n \lf B_1, \dots, B_{[\frac{n}{2}]} \rf = y$.
	
	Conversely, suppose there exist matrices $B_1, \dots, B_{[\frac{n}{2}]}$ such that $\det B_1=\dots=B_{[\frac{n}{2}]}$,  $y= \pi_n \lf B_1, \dots, B_{[\frac{n}{2}]} \rf $ and $\mu_{E}(B_j) <1$ for $j = 1, \dots, [\frac{n}{2}]$. Then for each such $j$ there is some $r_j>1$ so that $\mu_E(B_j) \leq \dfrac{1}{r_j}$. Therefore
	\begin{align*}
%	& 1 - z [B_j]_{11} - w [B_j]_{22} - zw \det B_j \neq 0 \; \text{ for all  } z,w \in r_j\D, \\
%	\text{thus, }\q & 
	1 - z [B_j]_{11}  - w [B_j]_{22} - zw \det B_j \neq 0 \; \text{ for all  } z,w \in \overline\D.
	\end{align*}
	Note that $ \pi_n \lf B_1, \dots, B_{[\frac{n}{2}]} \rf =y$ implies $[B_j]_{11}=\dfrac{y_j}{\nj}$, $[B_j]_{22}=\dfrac{y_{n-j}}{\nj}$ and $\det B_j =q$.
	Hence we have
	\[
	{n \choose j} - y_j z - y_{n-j}w + { n \choose j} qzw \neq 0,\; \text{ for all } z,w \in \overline\D.
	\]
	Therefore $y \in \Gn$.
\end{proof}

\begin{rem}
\begin{enumerate}
\item With the similar line of argument we can show that $y \in \Gamn$ if and only if there exist $\lt \frac{n}{2} \rt$ numbers of $2 \times 2$ matrices $B_1, \dots, B_{[\frac{n}{2}]}$ such that $\det B_1=\dots=B_{[\frac{n}{2}]}$,  $y= \pi_n \lf B_1, \dots, B_{[\frac{n}{2}]} \rf $ and $\mu_{E}(B_j) \leq 1$ for $j = 1, \dots, [\frac{n}{2}]$.\\
\item If a point belongs to the special set $\mathcal J_n \subset \Gn$, then instead of $[\frac{n}{2}]$ number of matrices the existence of only one matrix is sufficient. We can prove, with similar method as in the proof of last theorem, that $y \in \mathcal J_n$ if and only if there exist a $2 \times 2$ matrix $B$ such that $ \widehat\pi_n (B) = y $ and $\mu_{E}(B) <1$. Indeed, for $y \in \mathcal J_n$ the matrix 
$
B=\begin{bmatrix}
y_1/n &  w\\
\\
w &  y_{n-1}/n
\end{bmatrix},
$
where ${w}^2 = \dfrac{y_1 y_{n-1} - n^2 q}{n^2}$, satisfies required conditions; on the other hand for any $2 \times 2$ matrix $B$ such that $y= \widehat\pi_n \lf B \rf $ and $\mu_{E}(B) <1$, we have $n - y_1 z - y_{n-1}w + n qzw \neq 0$, for all  $z,w \in \overline\D$ which implies $y \in \mathcal J_n$.
\end{enumerate}
\end{rem}

Let $\Omega_E$ be the unit $\mu_E$-ball, that is, $\Omega_E = \ls B \in \C^{2 \times 2} : \mu_E(B) < 1 \rs $.
Every analytic map $\phi : \D \to \Gn$ gives rise to $[\frac{n}{2}]$ numbers of analytic functions $F_1, \dots, F_{[\frac{n}{2}]}$ from $\D$ to $\Omega_E$ such that $\pi_n \lf F_1, \dots, F_{[\frac{n}{2}]} \rf = \phi$. Indeed, write $\phi = (\phi_1, \dots, \phi_n)$ and consider the matrix valued function 
\[
F_j = \begin{bmatrix}
\phi_j & \phi_j \phi_{n-j} - \phi_n \\
1 & \phi_{n-j}
\end{bmatrix}.
\]
It can be seen, with arguments similar to the proof of Theorem $\ref{interpolating 1}$, that $\mu_{E}(F_j(\lm)) <1 $ for all $\lm \in \D$. Clearly each $F_j$ is analytic on $\D$ and $\pi_n \lf F_1, \dots, F_{[\frac{n}{2}]} \rf = \phi$. In the following remark we find a necessary condition for the $\mu$-synthesis problem where the underlying linear subspace is the space of $2 \times 2$ diagonal matrices.
\begin{rem}\label{mu interpolation}
	Let $\lm_1, \dots, \lm_m$ be $m$ distinct points in $\D$ and $B_1, \dots, B_m \in \Omega_E$. If there exists an analytic function $F : \D \longrightarrow \Omega_E$ such that $F(\lm_j)= B_j$ for $1 \leq j \leq m$, then there exists an analytic function $\phi : \D \longrightarrow \mathcal J_n$ such that $\phi (\lm_j)= \widehat{\pi_n}(B_j)$ for $1 \leq j \leq m$. In fact, we may take $\phi = \widehat{\pi_n} \circ F$. The problem of finding such $F$ is called the \textit{structured Nevanlinna-Pick problem} and clearly it has a connection with the finite interpolation problem in $\textrm{Hol}(\D, \Gn)$.
\end{rem}

\section{Invariant distances for a subset of $\widetilde{\mathbb G}_n$}

%\noindent In this section, we show that two holomorphically invariant families of function, namely the Carath\'{e}odory pseudo distance and the Lempert function, coincide for some points of $\Gn$. 
For a domain $\Omega \subset \C^n$ and two points $z,w \in \Omega$, the \textit{Carath\'{e}odory pseudo-distance} between $z,w$ is
$$ \mathcal C_{\Omega}(z,w) := \sup\big\{ \rho(f(z), f(w)) : f\in \mathcal O(\Omega,\D)  \big\}$$ 
and the \textit{Lempert function} for $\Omega$ is defined as
$$ \mathcal L_{\Omega}(z,w) := \inf \big\{ \rho(\al,\be) : \text{ there is } f\in \mathcal O(\D, \Omega),\; f(\al) = z, f(\be)=w   \big\},$$
where $\rho$ is the hyperbolic distance. It can be seen that for $z,w\in \D$, $ \rho(z,w) = \tanh^{-1}(d(z,w))$ where $d$ is the M\"{o}bius distance. The famous Lempert theorem states that $\mathcal C_{\mathcal D} = \mathcal L_{\mathcal D}$ for a domain $\mathcal D \subset \C^n$ if $\mathcal D$ is convex (see \cite{Le86}). 
%It was proved in 1981 and since then it was a long standing open problem whether the converse to the Lempert's theorem was true. 
In 2004, Agler and Young proved (see \cite{AY04}) that these two distances coincide for the non-convex domain $\mathbb G_2$. Later in 2007, Nikolov, Pflug and Zwonek proved in \cite{zwonek5} that they do not agree for $\mathbb G_n$ if $n\geq 3$. 
The tetrablock, a domain in $\C^3$, was shown to be the second example of a non-convex domain in which those two distances coincide (see \cite{awy, EKZ1}). Since $\mathbb E$ is linearly isomorphic to $\widetilde{\mathbb G}_3$, evidently Lempert's theorem holds for $\widetilde{\mathbb G}_3$. 
We still do not know whether Lempert theorem holds for $\Gn$, $n > 3$ or not, but we shall prove that $\mathcal C_{\Gn} = \mathcal L_{\Gn}$ for the points $z,w$ in $\mathcal J_n \subset \Gn$, where at least one of $z,w$ is equal to $0$. We apply the same techniques as in \cite{awy} to establish this.

\begin{thm}\label{CKL 1}
	Let $y = (y_1,\dots, y_{n-1},q) \in \mathcal J_n \subset \Gn$, then
	\begin{align*}
	\mathcal C_{\Gn}(0,y) = \mathcal L_{\Gn}(0,y) 
	= \max_{1\leq j \leq n-1} \ls \tanh^{-1} \dfrac{{n \choose j} \left|y_j - \bar y_{n-j} q \right| + \left|y_j y_{n-j} - {n \choose j}^2 q \right|}{{n \choose j}^2 - |y_{n-j}|^2 }\rs.
	\end{align*}
	%	here $\mathcal C_{\G}$ and $\mathcal L_{\G}$ are the Carath\'{e}odory distance and Lempert function of $\G$, respectively.
\end{thm}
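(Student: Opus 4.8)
The plan is to establish the two inequalities $\mathcal C_{\Gn}(0,y) \geq M$ and $\mathcal L_{\Gn}(0,y) \leq M$ separately, where $M$ denotes the maximum on the right-hand side, and then invoke the universal inequality $\mathcal C_{\Omega} \leq \mathcal L_{\Omega}$ valid on any domain to collapse the chain into equalities. The whole argument is the $\mathcal J_n$-analogue of the tetrablock computation in \cite{awy}, transported through the machinery already set up in Sections 2 and 3.

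For the lower bound on the Carath\'{e}odory distance, I would produce explicit extremal functions $\Phi_j(\cdot, y) : \Gn \to \D$. For $y \in \Gn$ with $|y_{n-j}| \leq |y_j|$, the fractional linear map $\Phi_j(\cdot, y)$ is analytic on $\Gn$ with values in $\overline{\D}$ (this is exactly the content of the characterization in Theorem $2.5$ of \cite{pal-roy 4}); moreover $\Phi_j(0,y) = 0$ since the numerator vanishes at $z=0$ after evaluating the coordinates at the origin, wait — more precisely, $\Phi_j$ is a function of $z$ and $y$, so the relevant functional is $\widetilde\Phi_j := \Phi_j(\omega_j, \cdot)$ for a suitable fixed $\omega_j \in \mathbb{T}$ chosen so that $|\widetilde\Phi_j(y)| = \|\Phi_j(\cdot,y)\|_{H^\infty}$, with $\widetilde\Phi_j(0) = 0$. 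Then $\rho(\widetilde\Phi_j(0), \widetilde\Phi_j(y)) = \tanh^{-1}\|\Phi_j(\cdot,y)\|_{H^\infty}$, and a direct computation of $\|\Phi_j(\cdot,y)\|_{H^\infty}$ over the circle — already recorded in the excerpt as $\frac{{n\choose j}|y_j - \bar y_{n-j} q| + |y_j y_{n-j} - {n\choose j}^2 q|}{{n\choose j}^2 - |y_{n-j}|^2}$ — gives $\mathcal C_{\Gn}(0,y) \geq M$. One must also check that this norm is indeed $<1$ for $y \in \Gn$ so that the functions land in $\D$, which again follows from Theorem $2.5$ of \cite{pal-roy 4}.

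For the upper bound on the Lempert function, I would use the structure of $\mathcal J_n$ and Theorem $\ref{Interpolating Function}$ to build a single analytic disc through $0$ and $y$ whose hyperbolic "length" is exactly $M$. Writing $\lm_0 \in \D$ with $\tanh^{-1}|\lm_0| = M$, one checks that $\|\Phi_1(\cdot, y)\|_{H^\infty} \leq |\lm_0|$ for the relevant $j$ attaining the maximum (using the special relations defining $\mathcal J_n$, which force all the $\Phi_j$-norms to be controlled by the $j=1$ one), so by the Schwarz lemma for $\Gn$ from \cite{pal-roy 5} — or directly by constructing $\vp = \widehat\pi_n \circ F$ with $F$ as in \eqref{F} — there is an analytic $\vp : \D \to \mathcal J_n \subset \Gn$ with $\vp(0) = 0$ and $\vp(\lm_0) = y$. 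Hence $\mathcal L_{\Gn}(0,y) \leq \rho(0,\lm_0) = \tanh^{-1}|\lm_0| = M$. Here I am reducing to the generic case $y_1 y_{n-1} \neq n^2 q$; the degenerate locus, and the boundary situation $\|\Phi_1(\cdot,y)\|_{H^\infty} = |\lm_0|$ not covered by Theorem $\ref{interpolating 1}$, would be handled by a continuity/approximation argument, perturbing $y$ slightly inside $\mathcal J_n$ and passing to the limit in both distances (both are continuous on $\Gn \times \Gn$).

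The main obstacle I anticipate is the upper bound, specifically verifying that for $y \in \mathcal J_n$ the quantity $M$ is genuinely realized by an honest holomorphic disc and that the various $\|\Phi_j(\cdot,y)\|_{H^\infty}$ for $j \geq 2$ do not exceed $\|\Phi_1(\cdot,y)\|_{H^\infty}$ — this is where the precise algebraic form of the set $\mathcal J_n$ (the proportionality $y_j = \frac{{n\choose j}}{n} y_1$, etc.) must be used to reduce an $[\tfrac n2]$-parameter interpolation problem to a single $2\times 2$ Schur-class problem solvable by Theorem $\ref{Interpolating Function}$. Once that reduction is in place, closing the loop $\mathcal C_{\Gn} \leq \mathcal L_{\Gn} \leq M \leq \mathcal C_{\Gn}$ is immediate.
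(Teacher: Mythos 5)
Your proposal is correct and follows essentially the same route as the paper: the Carath\'eodory lower bound comes from the functions $\Phi_j(\omega,\cdot):\Gn\to\D$ vanishing at the origin, the Lempert upper bound comes from the sufficiency part of the Schwarz lemma for $\Gn$ at points of $\mathcal J_n$ (the paper simply cites \cite{pal-roy 5} for this rather than re-deriving it via Theorem \ref{Interpolating Function}), and the universal inequality $\mathcal C_{\Omega}\leq\mathcal L_{\Omega}$ closes the chain. The degenerate-case worries you raise are absorbed into the citation of the Schwarz lemma and do not appear in the paper's argument.
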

\begin{proof}
	It is known that $ \mathcal C_{\Omega} \leq \mathcal L_{\Omega}$ for any domain $\Omega \subset \C^n$. We only show $\mathcal C_{\Gn}(0,y) \geq \mathcal L_{\Gn}(0,y)$ whenever $y \in \mathcal J_n$. One can write
	\begin{align*} 
	\mathcal L_{\Gn}(0,y) := \inf \big\{ \tanh^{-1}|\lambda| : &\text{ there exists } f\in \mathcal O(\D, \Gn) \\ 
	& \q \text{ such that } f(0) = (0,\dots,0), f(\lambda)=y   \big\}.
	\end{align*}
	For any $f\in \mathcal O(\D, \Gn)$ satisfying $f(0) = (0,\dots,0)$ and $f(\lambda)=(y_1,\dots, y_{n-1},q)$, using the Schwarz lemma for $\Gn$ in \cite{pal-roy 5}, we have
	$
	|\lm| \geq \max\limits_{1\leq j \leq n-1} \ls \n \Phi_j(.,y)\n_{H^{\infty}} \rs.
	$
	Thus
	\[
	\mathcal L_{\Gn}(0,y)  \geq  \max\limits_{1\leq j \leq n-1} \ls \tanh^{-1} \n \Phi_j(.,y)\n_{H^{\infty}} \rs.
	\]
	Now consider 
	$ \lambda_0 = \max\limits_{1\leq j \leq n-1} \ls \n \Phi_j(.,y)\n_{H^{\infty}} \rs.$
	Since $y \in \mathcal J_n$, again by the Schwarz lemma for $\Gn$ in \cite{pal-roy 5}, there exists $f\in \mathcal O(\D, \Gn)$ satisfying $f(0) = (0,\dots,0)$ and $f(\lambda_0)=y$. Therefore 
	\begin{align*}
	\nonumber \mathcal L_{\Gn}(0,y) & =  \max_{1\leq j \leq n-1} \ls \tanh^{-1} \n \Phi_j(.,y)\n_{H^{\infty}} \rs \\
	& = \max\limits_{1\leq j \leq n-1} \ls \tanh^{-1} \dfrac{{n \choose j} \left|y_j - \bar y_{n-j} q \right| + \left|y_j y_{n-j} - {n \choose j}^2 q \right|}{{n \choose j}^2 - |y_{n-j}|^2 }\rs. 
	\end{align*}
	%	$$ \mathcal L_{\G}(0,x) = \max \Bigg\{\tanh^{-1}\dfrac{|3s_1 - 3\bar{s}_2p|+|s_1s_2 - 9p|}{9 - |s_2|^2}, \tanh^{-1}\frac{|3s_2 - 3\bar{s}_1p|+|s_1s_2 - 9p|}{9 - |s_1|^2} \Bigg\}.$$
	
	The Carath\'{e}odory pseudo-distance can be expressed as
	$$ \mathcal C_{\Gn}(0,y) = \sup \big\{\tanh^{-1}|f(y)| :  f \in \mathcal O(\Gn,\D), f((0,\dots,0))=0 \big\}.$$
	For any $\om \in \T$ and any $j \in \{ 1, \dots, n-1 \}$, the function $\Phi_j(\om,.)$ is analytic from $\Gn$ to $\D$ and satisfies $\Phi_j(\om,(0,\dots,0))=0$. Thus
	$$\mathcal C_{\Gn}(0,y) \geq \tanh^{-1}|\Phi_j(\om,y)|\q \text{for any } \om \in \T.$$
	Hence $\mathcal C_{\Gn}(0,y) \geq  \tanh^{-1}\n \Phi_j(.,y)\n_{H^{\infty}}$ for any $1\leq j \leq n-1$.
	Therefore 
	$$\mathcal C_{\Gn}(0,y) \geq \max_{1\leq j \leq n-1} \ls \tanh^{-1}\n \Phi_j(.,y)\n_{H^{\infty}} \rs =  \mathcal L_{\G}(0,y).$$
	
\end{proof}

\begin{rem}
	By the last part of the proof of the last theorem it is clear that, for any point $y \in \Gn$, we have 
	$$\mathcal C_{\Gn}(0,y) \geq \max_{1\leq j \leq n-1} \ls \tanh^{-1}\n \Phi_j(.,y)\n_{H^{\infty}} \rs.$$
\end{rem}

\section{appendix}

Calculation for the implication $\ref{appendix 1} \Rightarrow \ref{appendix 1.1}$: \\
\begingroup
\allowdisplaybreaks
\begin{align*}
& (1-|\lm_0|^2 Z_{\sg,j}^*Z_{\sg,j}) =\begin{bmatrix}
1-\df{|y_j^0|^2}{\nj^2} - \df{|\lm_0|^2|w|^2}{\sg^2}  & -\df{\lm_0\bar{y}_j^0\sg w}{\nj}- \df{|\lm_0|^2y_{n-j}^0\bar{w}}{\nj\sg}   \\
\\
-\df{\bar{\lm}_0y_j^0\sg \bar{w}}{\nj}- \df{|\lm_0|^2\bar{y}_{n-j}^0w}{\nj\sg} & 1 - |\lm_0|^2\sg^2|w|^2 -\df{|\lm_0|^2|y_{n-j}^0|^2}{\nj^2}
\end{bmatrix},\\
&(1-Z_{\sg,j}^*Z_{\sg,j})^{-1} = \df{1}{\det(1-Z_{\sg,j}^*Z_{\sg,j})}\begin{bmatrix}
1 - \sg^2|w|^2 -\df{|y_{n-j}^0|^2}{\nj^2}   & \df{\bar{y}_j^0\sg w}{\nj \bar{\lm}_0}+ \df{y_{n-j}^0\bar{w}}{\nj \sg}   \\
\\
\df{y_j^0\sg \bar{w}}{\nj \lm_0}+ \df{\bar{y}_{n-j}^0w}{\nj \sg} &  1-\df{|y_j^0|^2}{\nj^2|\lm_0|^2}- \df{|w|^2}{\sg^2}
\end{bmatrix}.
\end{align*}
\endgroup
Then,
\begin{align}\label{app-4}
\nonumber & [\mathcal{K}_{Z_{\sg,j}}(|\lm_0|)\det (1-Z_{\sg,j}^*Z_{\sg,j})]_{11} \\
\nonumber & = \det (1-Z_{\sg,j}^*Z_{\sg,j})[(1-|\lm_0|^2 Z_{\sg}^*Z_{\sg})(1-Z_{\sg}^*Z_{\sg})^{-1}]_{11} \\
%	&= \det(1-Z_{\sg,j}^*Z_{\sg,j}) [(1-|\lm_0|^2 Z_{\sg,j}^*Z_{\sg,j})(1-Z_{\sg,j}^*Z_{\sg,j})^{-1}]_{11} \\
&= 1 - \df{|y_j^0|^2}{\nj^2} - \df{|y_{n-j}^0|^2}{\nj^2} + |q^0|^2 -
\df{|y_j^0y_{n-j}^0-\nj^2q^0|}{\nj^2}\Big(\frac{|\lm_0|}{\sg^2} +\frac{\sg^2}{|\lm_0|}\Big) .
\end{align}
Note that, $ \det (1-Z_{\sg,j}Z_{\sg,j}^*) = \det(1-Z_{\sg,j}^*Z_{\sg,j})$ and
$$(1-Z_{\sg,j}Z_{\sg,j}^*)^{-1} = \df{1}{\det(1-Z_{\sg,j}^*Z_{\sg,j})} \begin{bmatrix}
1 - \df{|w|^2}{\sg^2} -\df{|y_{n-j}^0|^2}{\nj^2}  &   \df{y_j^0 \bar{w}}{\nj \sg\lm_0}+ \df{\bar{y}_{n-j}^0\sg w}{\nj}\\
\\
\df{\bar{y}_j^0 w}{\nj \sg\bar{\lm}_0}+ \df{y_{n-j}^0\sg\bar{w}}{\nj} &  1-\df{|y_j^0|^2}{\nj^2|\lm_0|^2} -\sg^2|w|^2
\end{bmatrix}.$$
It follows that
\begingroup
\allowdisplaybreaks
\begin{align*}
&[\mathcal{K}_{Z_{\sg,j}}(|\lm_0|)\det (1-Z_{\sg,j}^*Z_{\sg,j})]_{12}\\
&= \det(1-Z_{\sg,j}^*Z_{\sg,j}) (1-|\lm_0|^2) [(1-Z_{\sg,j}Z_{\sg,j}^*)^{-1}Z_{\sg,j}]_{21}
%	&= \df{1}{\det(1-Z_{\sg,j}^*Z_{\sg,j})}\Bigg[\df{|y_j^0|^2w}{\nj^2\sg|\lm_0|^2} + \df{y_j^0y_{n-j}^0\sg\bar{w}}{\nj^2\lm_0} +	\df{w}{\sg} - \df{|y_j^0|^2w}{\nj^2\sg|\lm_0|^2} - \sg w|w|^2 \Bigg]\\
%	&=\df{1}{\det(1-Z_{\sg,j}^*Z_{\sg,j})}\Big[ \df{w}{\sg} + \df{q^0}{\lm_0}\sg\bar{w} \Big].\\
= (1-|\lm_0|^2)\Big( \df{w}{\sg} + \df{q^0}{\lm_0}\sg\bar{w} \Big).
\end{align*}
\endgroup
Similarly we have
$$[\mathcal{K}_{Z_{\sg,j}}(|\lm_0|)\det (1-Z_{\sg,j}^*Z_{\sg,j})]_{21} = (1-|\lm_0|^2)\Big( \df{\bar{w}}{\sg} + \df{\bar{q}^0}{\bar{\lm}_0}\sg w \Big).$$
Clearly
\[
(Z_{\sg,j}Z_{\sg,j}^* - |\lm_0|^2) 
= \begin{bmatrix}
\df{|y_j^0|^2}{\nj^2|\lm_0|^2} +\sg^2|w|^2 -|\lm_0|^2  &   \df{y_j^0 \bar{w}}{\nj \sg\lm_0} + \df{\bar{y}_{n-j}^0\sg w}{\nj}\\
\\
\df{\bar{y}_j^0 w}{\nj \sg\bar{\lm}_0} +\df{y_{n-j}^0\sg\bar{w}}{\nj}  & \df{|w|^2}{\sg^2} +\df{|y_{n-j}^0|^2}{\nj^2} -|\lm_0|^2
\end{bmatrix}
\]
Therefore
\begin{align*}
&[\mathcal{K}_{Z_{\sg,j}}(|\lm_0|)\det (1-Z_{\sg,j}^*Z_{\sg,j})]_{22}\\
&= \det (1-Z_{\sg,j}^*Z_{\sg,j})[(Z_{\sg}Z_{\sg}^* - |\lm_0|^2)(1-Z_{\sg}Z_{\sg}^*)^{-1}]_{22} \\
&= -|\lm_0|^2 - \df{|y_j^0|^2}{\nj^2} - \df{|y_{n-j}^0|^2}{\nj^2} +\df{|q^0|^2}{|\lm_0|^2} - \df{|y_j^0y_{n-j}^0-\nj^2q^0|}{\nj^2}\Big(|\lm_0|\sg^2 +\frac{1}{\sg^2|\lm_0|}\Big).
\end{align*}

\noindent\textbf{Acknowledgments}\\
The author is supported by the Institute postdoctoral fellowship of TIFR-CAM. The author would like to thank Pof. Sourav Pal for so many helpful discussion and suggestions.

The author would also like to thank the referee for making several valuable suggestions and comments which help in improving the exposition of the paper.


\begin{thebibliography}{AAAA}
	\markboth{Bibliography}{Bibliography}       % Header
	\addcontentsline{toc}{chapter}{Bibliography}    % To add Bibliography to the Table of Contents
	
%	\vspace{0.5cm}
	
	
	\bibitem{awy} A. A. Abouhajar, M. C. White and N. J. Young, A
	Schwarz lemma for a domain related to $\mu$-synthesis, \textit{J. Geom. Anal.} 17 (2007), $717 - 750$.
	
	
	\bibitem{AY-BLMS} J. Agler and N. J. Young, A schwarz lemma for symmetrized bidisc, \textit{Bull. London Math. Soc.}, 33 (2001), $175 - 186$.
	
	
	\bibitem{AY04}  J. Agler and N. J. Young, The hyperbolic geometry of the symmetrized bidisc, {\em J. Geom. Anal.} {14} (2004) $375 - 403$.

\bibitem{BFT90} Bercovici, H.,  Foia\c{s}, C. and Tannenbaum, A., Structured interpolation theory, {\em Operator Theory: Advances and Applications}, 47 (1990), 195-220.
	

	\bibitem{costara1} C. Costara, On the spectral Nevanlinna-Pick	problem, \textit{Studia Math.}, 170 (2005), $23-55$. 

\bibitem{doyle} Doyle, J. C.,  Analysis of feedback systems with structured uncertainties, {\em IEE Proceedings D, Control Theory and Applications} 129 (1982),  no. 6, 242-250.

	\bibitem{EKZ1} A. Edigarian, L. Kosi\'nski and W. Zwonek, The Lempert theorem and the tetrablock, \textit{J. Geom. Anal.}, 23 (2003), $1818 -- 1831$. 
	

	 \bibitem{Le86} L. Lempert, Complex geometry in convex domains,	 {\em Proc. Intern. Cong. Math.}, Berkeley, CA, (1986) $759 - 765$.
	  

	\bibitem{zwonek5}  N. Nikolov, P. Pflug, W. Zwonek, The Lempert function of the symmetrized polydisc in higher dimensions is not a distance, \textit{Proc. Amer. Math. Soc.}, 135 (2007), $2921 - 2928$.


	\bibitem{pal-roy 1} S. Pal and S. Roy, A generalized Schwarz lemma for two domains related to $\mu$-synthesis,	\textit{Complex Manifolds}, 5 (2018), $1 - 8$.

	\bibitem{pal-roy 2} S. Pal and S. Roy, A Schwarz lemma for two families of domains and complex geometry, \textit{Complex Var. Elliptic Equ.}, 66 (2021), no. 5, $756 - 782$.

	\bibitem{pal-roy 4} S. Pal and S. Roy, Characterizations of the symmetrized polydisc via another family of domains,  \textit{International Journal of Mathematics}, 32 (2021), no. 6, 2150036, 29p. 

	\bibitem{pal-roy 5} S. Pal and S. Roy, A Schwarz lemma for the symmetrized polydisc via another family of domains, \textit{preprint}, avaiable at \textit{https://arxiv.org/abs/2110.04819}. 
	
	\bibitem{pal-roy 3} S. Pal and S. Roy, The complex geometry and a two point interpolation for two families of domains, \textit{https://arxiv.org/abs/1904.03745}. 


\end{thebibliography}
\end{document}